\newtheorem{thm}{Theorem}[section]
\newtheorem{cor}[thm]{Corollary}
\newtheorem{lem}[thm]{Lemma}
\newtheorem{exm}[thm]{Example}
\theoremstyle{definition}
\theoremstyle{remark}
\newtheorem{rem}[thm]{Remark}
\numberwithin{equation}{section}
\def\cf{\mathcal{ F}}
\def\ch{\mathcal{ H}}
\def\ct{\mathcal{ T}}
\def\cw{\mathcal{W}}
\def\bc{{\mathbb C}}
\def\bn{{\mathbb N}}
\def\br{{\mathbb R}}
\def\bt{{\mathbb T}}
\def\a{\alpha}
\def\b{\beta}
\def\g{\gamma}  
\def\d{\delta}  
\def\e{\epsilon}
\def\l{\lambda} 
\def\m{\mu}
\def\n{\nu}
\def\s{\sigma} 
\def\t{\tau}
\def\w{\omega} \def\O{\Omega}
\def\a{\alpha}
\def\bal1{\textbf{B}_1^+}
\begin{document}

\title[Weighted Laws of Large Numbers]{Weighted Laws of Large Numbers and Weighted One-Sided ergodic Hilbert Transforms on Vector Valued $L_p$-spaces}

 \author{Farrukh Mukhamedov}
\address{Farrukh Mukhamedov\\
Department of Mathematical Sciences, \& College of Science,\\
The United Arab Emirates University, Al Ain, Abu Dhabi,\\
15551, UAE} \email{{\tt far75m@yandex.ru}, {\tt
farrukh.m@uaeu.ac.ae}}

 \maketitle

\begin{abstract}
A more general notion of weight called \textit{admissible} is
introduced and then an investigation is carried out on the a.e.
convergence of weighted strong laws of large numbers and their
applications to weighted one-sided ergodic Hilbert transforms on
vector-valued $L_p$-spaces. Furthermore, the obtained results are applied to the a.e. convergence of random
modulated weighted ergodic series, which is also new in the classical setting. \\

 \noindent {\it
    Mathematics Subject Classification}: 47H25; 28D05; 37A05; 60G10;
47H60\\
{\it Key words}: weight; one-sided ergodic Hilbert transform; modulated; random;
\end{abstract}

\section{Introduction}

In the present paper, a more general notion of weight called
\textit{admissible} is introduced. For such types of weights, we
study the weighted strong laws of large numbers (SLLN) on
vector-valued $L_p$-spaces. If one considers a particular cases of
the introduced weights, we recover the earlier know results (see
for example \cite{CL2003,C}).

We point out that investigation of the convergence of weighted
strong laws of large numbers, for independent identically
distributed (iid) variables, has been started in \cite{JOP}.
Namely, for a sequence $\{w_k\}$ of positive numbers (weights)
with partial sums $W_n=\sum_{k=1}^n w_k$ (which goes to $\infty$),
they found necessary and sufficient conditions to ensure that for
every iid sequence $\{\xi_k\}$ with finite expectations the
weighted averages
$$
\frac{1}{W_n}\sum_{k=1}^n w_k\xi_k
$$
converge almost everywhere (a.e.). In \cite{LinWeb},  for a linear
contraction $T:L_p\to L_p$ the a.e. convergence or $L_p$-norm convergence of the weighted averages
$$
\frac{1}{W_n}\sum_{k=1}^n w_k\ T^k f, \ \ f\in L_p(\m)
$$
have been studied. Moreover, weighted strong laws of large numbers
(SLLN) were established for centered random variables. There are
many results devoted to the investigations of SLLN \cite{As2,As3}.

On the other hand, the investigations of  SLLN are closely related
to the  a.e. convergence of the one-sided ergodic Hilbert
transform
$$
\sum_{n=1}^\infty \frac{T^nf}{n}, \ \ f\in L_p
$$
of a contraction $T: L_p\to L_p$ (see \cite{H}).  Depending on
$p$, many papers are devoted to the convergence (i.e. a.e. or norm
convergence) of the one-sided Hilbert transform (see
\cite{AC,AsLin,C,Cot,CL09,CCL,Ga,LOT})

In \cite{BMS,CL2003,C} relations between weighted SLLN and weighted one-sided ergodic Hilbert transforms have been studied.
Here by the weighted one-sided ergodic Hilbert transform we mean the following one
$$
\sum_{n=1}^\infty \frac{a_nT^nf}{n^{\g}},
$$
where $\{a_n\}$ some bounded sequence, and $\g>0$. The case
$a_n=\l^n$, (where  $|\l|=1$) and $\g=1$ has been considered in
\cite{CCC}. Other cases have been investigated in \cite{CL05} by
means of extensions of Menchoff-Rademacher theorem. These types of
research also are closely related to the investigations of ergodic
series, i.e.
$$
\sum_{n=1}^\infty a_n f(T^nx)
$$
where $T$ is a measure-preserving transformation of $(\O,\cf,\m)$ (see \cite{A,BeW,H,K}). Recently, in \cite{Fan17}
it was proved that the last series converges a.e. if and only if
$\sum_{n=1}^\infty |a_n|^2<\infty$.

In the present paper, we obtain the a.e. convergence of weighted
one-sided ergodic Hilbert transforms, i.e.
$$
\sum_{n=1}^\infty \frac{a_nT^{n_k}f}{W_n},
$$
which is clearly more general then the known ones. It is stressed
that the obtained results extend and provide a general framework
for all known theorems in this direction. Furthermore, as an
application of the proved results, the a.e. converges of random
modulated weighted ergodic series is obtained which is even new in
the classical setting. We hope that the results of this paper open
new insight in the theory of random ergodic Hilbert transforms.

\section{Admissible weights}

Let $(\Omega,\mathcal{F},\m)$ be an arbitrary measure space with a
probability measure $\m$, and $X$ a Banach space (over real or
complex numbers). We denote the Lebesgue-Bochner space
$L_p(\O,\mathcal{F},\m;X)$ ($p\geq 1$) by $L_p(\m,X)$, and by
$L_p(\m)$ when $X$ is a scalar field, if there is no chance of
confusing the underlying  measurable space. By $\ell_p$ we denote
the standard $p$-summable sequence space. For the definitions and
properties of these spaces we refer to \cite{DS,DU}.

An increasing sequence $\{G_n\}$ is said to be a \textit{weight} if $G_1\geq 1$ and $G_n\to\infty$ as $n\to\infty$.
A weight $\{W_n\}$ is called \textit{weak $p$-admissible} ($p>1$) w.r.t. to a weight $\{G_n\}$  if one can find an increasing sequence $\{n_k\}\subset\bn$ and a sequence $\{\xi_n\}$  such that
\begin{eqnarray}\label{W1}
&&\sum_{k=1}^\infty\bigg(\frac{G_{n_k}}{W_{n_k}}\bigg)^p<\infty\\[2mm]
\label{W2}
&&\sum_{k=1}^\infty\bigg(\frac{\xi_k}{W_{n_k}}\bigg)^p<\infty
\end{eqnarray}

\begin{rem} We note that in this definition it is not necessary that the sequence $\{\xi_k\}$ equals $\{G_{n_k}\}$.
Below, we provide an example which clarifies this issue, and it also gives some way to construct weak $p$-admissible weights.
\end{rem}

\begin{exm}\label{E0} Let $p>1$ and assume that $\{G_n\}$ be a weight and $\{n_k\}\subset\bn$ be an increasing sequence such that
\begin{equation}\label{E01}
\sum_{k=1}^\infty\frac{1}{n_k}<\infty, \ \ \ \sum_{k=1}^\infty\frac{1}{(G_{n_k})^{p}}<\infty, \ \ p>1
\end{equation}
Define $W_n=n^{1/p}G_n$. Then, due to \eqref{E01}
$$
\sum_{k=1}^\infty\bigg(\frac{G_{n_k}}{W_{n_k}}\bigg)^p=\sum_{k=1}^\infty\frac{1}{n_k}<\infty
$$
Notice that
$$
\sum_{k=1}^\infty\bigg(\frac{G_{n}}{W_{n}}\bigg)^p=\infty
$$

Now choose $\xi_k=n_k^{1/p}$ which is clearly different from $\{G_{n_k}\}$. Then
$$
\sum_{k=1}^\infty\bigg(\frac{\xi_k}{W_{n_k}}\bigg)^p=\sum_{k=1}^\infty\frac{1}{G_{n_k}^p}<\infty.
$$
Hence, the weight $\{W_n\}$ is weak $p$-admissible w.r.t. $\{G_n\}$.

Now let us consider, a particular case, namely, we define  $G_n=(\ln n)^{\b+1/p}(\ln\ln n)^\g$ with $\b>0$, $\g\geq 1$. Then for $n_k=k^k$, the condition \eqref{E01} is satisfied, hence, $W_n=n^{1/p}(\ln n)^{\b+1/p}(\ln\ln n)^\g$ is weak $p$-admissible weight w.r.t. $\{G_n\}$.
\end{exm}

A weight $\{W_n\}$ is called \textit{p-admissible} w.r.t. to a weight $\{G_n\}$ if one can find an increasing sequence $\{n_k\}\subset\bn$  such that
\begin{eqnarray}\label{W3}
&&\sum_{k=1}^\infty\bigg(\frac{G_{n_k}}{W_{n_k}}\bigg)^p<\infty\\[2mm]
\label{W4}
&&\sum_{k=1}^\infty\bigg(\frac{n_{k+1}-n_k}{W_{n_k}}\bigg)^p<\infty
\end{eqnarray}

Clearly, if we define $\xi_k=n_{k+1}-n_k$, then one obviously gets
that any $p$-admissible weight is weak $p$-admissible. The set of
all $p$-admissible weights w.r.t. $\{G_n\}$ we denote by
$\cw_p(G_n)$. Examples of $p$-admissible weights are provided in
the Appendix.

\section{Weighted Strong Law of large numbers}

Let $\{W_n\}$ be a weak $p$-admissible weight w.r.t. $\{G_n\}$ with associated sequences
$\{n_k\}$ and $\{\xi_k\}$. A sequence $\{f_n\}\subset L_p(\m,X)$ is called \textit{$(W_n)$-sequence} if there is $C>0$ such that for any $m\in\bn$
\begin{eqnarray}\label{WE1}
\bigg(\int\max\limits_{n_m< n\leq n_{m+1}}\bigg\|\sum_{k=n_m+1}^n f_k\bigg\|_X^pd\m\bigg)^{1/p}\leq  C\xi_m
\end{eqnarray}

\begin{rem}\label{r31} We notice that if  $\{f_n\}\subset L_p(\m,X)$ with $A=\sup_n\|f_n\|_p<\infty$, and $\{W_n\}$ is $p$-admissible, then $\{f_n\}$ is $(W_n)$-sequence. Indeed, we get
\begin{eqnarray*}
\int\max\limits_{n_m< n\leq n_{m+1}}\bigg\|\sum_{k=n_m+1}^n f_k\bigg\|_X^p d\m&\leq &\int\max\limits_{n_m< n\leq n_{m+1}}\bigg(\sum_{k=n_m+1}^n\|f_k\|_X\bigg)^pd\m\\[2mm]
&\leq &\bigg\|\sum_{k=n_m+1}^{n_{m+1}}\|f_k\|_X\bigg\|_p^p\\[2mm]
&\leq &\bigg(\sum_{k=n_m+1}^{n_{m+1}}\|f_k\|_p\bigg)^p\\[2mm]
&\leq& A^p (n_{m+1}-n_{m})^p
 \end{eqnarray*}
 this yields the required assertion.
\end{rem}

\begin{thm}\label{T1} Let $p\geq 1$ and $\{G_n\}$ be a weight. Assume that $\{W_n\}$ is a weak $p$-admissible weight w.r.t. $\{G_n\}$, and
$\{f_n\}\subset L_p(\m,X)$ is a $(W_n)$-sequence such that
\begin{equation}\label{T11}
\sup_n\bigg\|\frac{1}{G_n}\sum_{k=1}^n f_k\bigg\|=K<\infty.
\end{equation}
Then $\frac{1}{W_n}\sum_{k=1}^n f_k$ converges a.e. Furthermore,
\begin{equation}\label{1T11}
\sup_n\frac{1}{W_n}\bigg\|\sum_{k=1}^n f_k\bigg\|_X\in L_p(\m).
\end{equation}
\end{thm}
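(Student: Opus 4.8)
The plan is to split the sum $\sum_{k=1}^n f_k$ along the subsequence $\{n_k\}$ furnished by weak $p$-admissibility, and to treat the ``block contributions'' and the ``tail inside a block'' separately. First I would set $S_n=\sum_{k=1}^n f_k$ and, for $n_m<n\le n_{m+1}$, write
\begin{equation*}
\frac{S_n}{W_n}=\frac{S_{n_m}}{W_n}+\frac{1}{W_n}\sum_{k=n_m+1}^n f_k .
\end{equation*}
Since $\{W_n\}$ is increasing, $W_n\ge W_{n_m}$ on this range, so the second term is dominated in norm by $\frac{1}{W_{n_m}}\max_{n_m<n\le n_{m+1}}\|\sum_{k=n_m+1}^n f_k\|_X$. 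Call this random variable $Y_m$. The $(W_n)$-sequence hypothesis \eqref{WE1} gives $\|Y_m\|_p\le C\xi_m/W_{n_m}$, hence by \eqref{W2} we have $\sum_m \|Y_m\|_p^p<\infty$, so $\sum_m Y_m^p\in L_p$ and in particular $Y_m\to 0$ a.e. That disposes of the in-block remainder, both for the a.e. convergence and (after taking the $p$-th power and summing) for the maximal bound \eqref{1T11}.

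For the first term, the idea is that $S_{n_m}/W_n$ is, up to the bounded factor $G_{n_m}/W_n$, just $(1/G_{n_m})S_{n_m}$, whose norm is uniformly bounded by $K$ via \eqref{T11}. The subtlety is that $n$ ranges over a whole block while $m$ is fixed, so I would bound $\|S_{n_m}\|_X/W_n$ by the maximal function $Z:=\sup_m \|S_{n_m}\|_X\, G_{n_m}/W_{n_m}\cdot(\text{something summable})$ — more precisely, on $n_m<n\le n_{m+1}$ one has $\|S_{n_m}/W_n\|_X\le \|S_{n_m}\|_X/W_{n_m}=(\|S_{n_m}\|_X/G_{n_m})\cdot(G_{n_m}/W_{n_m})$, and $\|S_{n_m}\|_X/G_{n_m}$ has $L_p$-norm $\le K$ by \eqref{T11}. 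Thus the sequence $U_m:=\|S_{n_m}\|_X/W_{n_m}$ satisfies $\|U_m\|_p\le K\,G_{n_m}/W_{n_m}$, and \eqref{W1} gives $\sum_m\|U_m\|_p^p\le K^p\sum_m (G_{n_m}/W_{n_m})^p<\infty$; hence $\sum_m U_m^p\in L_p$, so $U_m\to0$ a.e. and $\sup_m U_m\in L_p$. Combining, for a.e.\ $\omega$ and any $\eps>0$ there is $M$ with $U_m(\omega)<\eps$ and $Y_m(\omega)<\eps$ for $m\ge M$; then for $n>n_M$, choosing the block $n_m<n\le n_{m+1}$, $\|S_n(\omega)/W_n\|_X\le U_m(\omega)+Y_m(\omega)<2\eps$, giving $S_n/W_n\to 0$ a.e.

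For \eqref{1T11}, I would bound $\sup_n\|S_n\|_X/W_n\le \sup_m U_m+\sup_m Y_m$ (each supremum taken over $m\ge1$, using that every $n$ lies in some block and the first few $n\le n_1$ are handled by a finite bound absorbed into, say, $U_0:=\sup_{n\le n_1}\|S_n\|_X/W_n$ which lies in $L_p$ since $\|S_n\|_X\le\sum_{k\le n_1}\|f_k\|_X$ and each $f_k\in L_p$). Both $\sup_m U_m$ and $\sup_m Y_m$ are in $L_p$ because a sum of $p$-th powers of their terms is integrable, and a supremum is dominated by that sum's $1/p$-th power. The main obstacle, and the place to be careful, is exactly the interplay between the fixed index $m$ and the running index $n$ within a block: one must make sure the ``denominator is big enough'' estimate $W_n\ge W_{n_m}$ is used uniformly and that the maximal function in \eqref{WE1} is invoked over the correct range $n_m<n\le n_{m+1}$ so that no term is left uncontrolled at the block boundaries. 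Everything else is a routine application of the Borel–Cantelli-type principle that $\sum_m g_m^p\in L_1\Rightarrow g_m\to0$ a.e. and $\sup_m g_m\in L_p$.
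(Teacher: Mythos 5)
Your proposal is correct and follows essentially the same route as the paper's proof: split $S_n/W_n$ along the subsequence $\{n_m\}$, control $\|S_{n_m}\|_X/W_{n_m}$ via \eqref{T11} and \eqref{W1}, control the in-block maximal increment via the $(W_n)$-sequence bound \eqref{WE1} and \eqref{W2}, and deduce both the a.e.\ convergence and the $L_p$ maximal bound from the summability of the $p$-th powers. The only (harmless) slip is writing ``$\sum_m Y_m^p\in L_p$'' where you mean $\sum_m Y_m^p\in L_1$, from which $\sup_m Y_m\le\bigl(\sum_m Y_m^p\bigr)^{1/p}\in L_p$ follows exactly as you then use it.
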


\begin{proof} Since $\{W_n\}$ is a weak $p$-admissible weight w.r.t. $\{G_n\}$ then there is an increasing $\{n_m\}\subset\bn$  and $\{\xi_m\}$ such that \eqref{W1} and \eqref{W2} hold.

Now due to \eqref{T11} we have
\begin{eqnarray}\label{T12}
\bigg\|\frac{1}{W_{n_m}}\sum_{k=1}^{n_m} f_k\bigg\|_p&=&\frac{G_{n_m}}{W_{n_m}}\bigg\|\frac{1}{G_{n_m}}\sum_{k=1}^{n_m} f_k\bigg\|_p\\[2mm]
&\leq& K \frac{G_{n_m}}{W_{n_m}}
\end{eqnarray}

Hence, according to \eqref{W1}, the last one implies
\begin{eqnarray*}
\int\sum_{m=1}^\infty\bigg\|\frac{1}{W_{n_m}}\sum_{k=1}^{n_m} f_k\bigg\|^p_X d\m &=&
\sum_{m=1}^\infty\bigg\|\frac{1}{W_{n_m}}\sum_{k=1}^{n_m} f_k\bigg\|_p^p\\[2mm]
&\leq& \sum_{m=1}^\infty K^p \bigg(\frac{G_{n_m}}{W_{n_m}}\bigg)^p<\infty.
\end{eqnarray*}
Hence,
$$
\sum_{m=1}^\infty\bigg\|\frac{1}{W_{n_m}}\sum_{k=1}^{n_m} f_k\bigg\|^p_X<
\infty \ \ \textrm{a.e.}
$$
so,
\begin{equation}\label{T13}
\bigg\|\frac{1}{W_{n_m}}\sum_{k=1}^{n_m} f_k\bigg\|_X \to 0 \ \ \  \textrm{a.e.}
\end{equation}


Now, for $ n_m\leq n<n_{m+1}$,
\begin{eqnarray*}
\int\max_{ n_m<n\leq n_{m+1}}\bigg\|\frac{1}{W_{n}}\sum_{k=1}^{n} f_k-\frac{1}{W_{n}}\sum_{k=1}^{n_m} f_k\bigg\|_X^pd\m&\leq&
\frac{1}{W^p_{n}}\int\max_{ n_m< n\leq n_{m+1}}\bigg\|\sum_{k=n_m+1}^n f_k\bigg\|_X^pd\m\\[2mm]
&\leq& C\bigg(\frac{\xi_m}{W_{n_m}}\bigg)^p
\end{eqnarray*}
Hence, due to \eqref{W2},
\begin{eqnarray}\label{T14}
\max_{ n_m<n\leq n_{m+1}}\bigg\|\frac{1}{W_{n}}\sum_{k=1}^{n} f_k-\frac{1}{W_{n}}\sum_{k=1}^{n_m} f_k\bigg\|_X\to 0
\ \ \textrm{a.e.}
\end{eqnarray}

On the other hand, by  \eqref{T13}
$$
\bigg\|\frac{1}{W_{n}}\sum_{k=1}^{n_m} f_k\bigg\|_X\leq\frac{1}{W_{n_m}}\bigg\|\sum_{k=1}^{n_m} f_k\bigg\|_X\to 0
$$
Therefore,  \eqref{T14} implies
$$
\bigg\|\frac{1}{W_{n}}\sum_{k=1}^{n} f_k\bigg\|_X\to 0 \ \ \ \textrm{a.e.}
$$

Now, let us denote $S_n=\sum\limits_{k=1}^{n} f_k$.
Then
\begin{eqnarray*}
\int\sup_m\bigg\|\frac{S_{n_m}}{W_{n_m}}\bigg\|_X^pd\m&\leq& \int\sum_{m=1}^\infty\bigg\|\frac{S_{n_m}}{W_{n_m}}\bigg\|_X^pd\m\\[2mm]
&\leq&\sum_{m=1}^\infty K^p \bigg(\frac{G_{n_m}}{W_{n_m}}\bigg)^p<\infty,
\end{eqnarray*}
hence,
$$
\sup_m\bigg\|\frac{S_{n_m}}{W_{n_m}}\bigg\|_X\in L_p(\m)
$$

For $ n_m\leq n<n_{m+1}$,
\begin{eqnarray*}
\bigg\|\frac{S_{n}}{W_{n}}\bigg\|_X\leq \bigg\|\frac{S_{n_m}}{W_{n_m}}\bigg\|_X+\frac{1}{W_{n_m}}\bigg\|\sum_{k=n_{m}+1}^n f_k\bigg\|_X
\end{eqnarray*}

Consequently,
\begin{eqnarray*}
\bigg\|\sup_n\frac{\|S_{n}\|_X}{W_{n}}\bigg\|_p\leq \bigg\|\sup_m\frac{\|S_{n_m}\|_X}{W_{n_m}}\bigg\|_p+\bigg\|\sup_{m,n_m}\frac{1}{W_{n_m}}\bigg\|\sum_{k=n_{m}+1}^n f_k\bigg\|_X\bigg\|_p
\end{eqnarray*}

The first term of RHS of the last expression is finite, therefore, we show the finiteness of the second term. Indeed,
\begin{eqnarray*}
\bigg\|\sup_{m,n_m}\frac{1}{W_{n_m}}\bigg\|\sum_{k=n_{m}+1}^n f_k\bigg\|_X\bigg\|_p^p&\leq &
\int\sum_{m\geq 1}\frac{1}{W_{n_m}}\sup_{n_m<n\leq n_{m+1}}\bigg\|\sum_{k=n_{m}+1}^n f_k\bigg\|_X^p d\m\\[2mm]
&\leq& C\sum_{m\geq 1}\bigg(\frac{\xi_m}{W_{n_m}}\bigg)^p<\infty
\end{eqnarray*}
which completes the proof.
\end{proof}

\begin{rem} The proved theorem extends the results of \cite{CL2003,CL05} to more general weights, since in the mentioned
papers weights considered as in Examples \ref{E1}-\ref{E3} have
been studied.
\end{rem}

Due to Remark \ref{r31} from the proved theorem, we immediately
find the following corollary.

\begin{cor}\label{CT1} Let $p\geq 1$ and $\{G_n\}$ be a weight. Assume that
$\{f_n\}\subset L_p(\m,X)$ with $\sup_n\|f_n\|_p<\infty$ and
\begin{equation}\label{CT11}
\sup_{n\geq 1}\bigg\|\frac{1}{G_n}\sum_{k=1}^n f_k\bigg\|_p<\infty.
\end{equation}
Then, for any $\{W_n\}\in\cw_p(G_n)$,  the weighted means
$$\frac{1}{W_n}\sum_{k=1}^n f_k$$ converge a.e. Furthermore
$$
\sup_{n\geq 1}\frac{1}{W_n}\bigg\|\sum_{k=1}^n f_k\bigg\|_X\in L_p(\m).
$$
\end{cor}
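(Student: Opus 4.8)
The plan is to deduce Corollary~\ref{CT1} from Theorem~\ref{T1} by verifying that its hypotheses are satisfied. Since $\{W_n\}\in\cw_p(G_n)$, by definition there is an increasing sequence $\{n_k\}\subset\bn$ for which \eqref{W3} and \eqref{W4} hold; setting $\xi_k=n_{k+1}-n_k$ makes $\{W_n\}$ weak $p$-admissible w.r.t. $\{G_n\}$ with associated sequences $\{n_k\}$ and $\{\xi_k\}$, as already observed in the paragraph following the definition of $p$-admissibility. So the first step is simply to record this choice of $\{n_k\}$ and $\{\xi_k\}$.

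The second step is to check that $\{f_n\}$ is a $(W_n)$-sequence with respect to these sequences. This is exactly the content of Remark~\ref{r31}: given $A=\sup_n\|f_n\|_p<\infty$, the triangle inequality in $X$ followed by Minkowski's inequality in $L_p$ yields
\begin{equation*}
\bigg(\int\max\limits_{n_m< n\leq n_{m+1}}\bigg\|\sum_{k=n_m+1}^n f_k\bigg\|_X^p d\m\bigg)^{1/p}\leq \sum_{k=n_m+1}^{n_{m+1}}\|f_k\|_p\leq A(n_{m+1}-n_m)=A\,\xi_m,
\end{equation*}
which is precisely condition \eqref{WE1} with $C=A$. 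The third step is to note that hypothesis \eqref{CT11}, namely $\sup_n\|\frac{1}{G_n}\sum_{k=1}^n f_k\|_p=:K<\infty$, is exactly condition \eqref{T11} of Theorem~\ref{T1} (the norm there being the $L_p$-norm).

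Having verified all hypotheses, the final step is to invoke Theorem~\ref{T1} directly: it gives both the a.e.\ convergence of $\frac{1}{W_n}\sum_{k=1}^n f_k$ and the membership $\sup_n\frac{1}{W_n}\|\sum_{k=1}^n f_k\|_X\in L_p(\m)$, which are the two conclusions claimed. There is essentially no obstacle here — the corollary is a packaging of Theorem~\ref{T1} together with Remark~\ref{r31}; the only thing one must be slightly careful about is that the sequence $\{n_k\}$ furnished by $p$-admissibility is the \emph{same} sequence used in the definition of the $(W_n)$-sequence property, which is automatic since Remark~\ref{r31} holds for \emph{any} increasing $\{n_k\}$ once $\sup_n\|f_n\|_p<\infty$.
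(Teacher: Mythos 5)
Your proposal is correct and follows exactly the paper's route: the paper derives Corollary~\ref{CT1} "due to Remark~\ref{r31} from the proved theorem," i.e.\ by observing that $p$-admissibility plus $\sup_n\|f_n\|_p<\infty$ makes $\{f_n\}$ a $(W_n)$-sequence with $\xi_k=n_{k+1}-n_k$, and then applying Theorem~\ref{T1}. Your verification of the $(W_n)$-sequence condition and the identification of \eqref{CT11} with \eqref{T11} are precisely the steps the paper leaves implicit.
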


\begin{thm}\label{T2} Let the conditions of Theorem \ref{T1} are satisfied. Assume that
\begin{equation}\label{T21}
\sum_{n=1}^\infty\frac{G_n}{W_n}\bigg(1-\frac{W_n}{W_{n+1}}\bigg)<\infty.
\end{equation}
Then the series
\begin{equation}\label{T22}
\sum_{k=1}^\infty\frac{f_k}{W_k}
\end{equation}
 converges a.e., and
$$
\sup_{n\geq 1}\bigg\|\sum_{k=1}^n \frac{f_k}{W_k}\bigg\|_X\in L_p(\m).
$$
\end{thm}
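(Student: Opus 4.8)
The plan is to pass from the weighted averages to the weighted series by Abel summation, and then to identify each of the resulting pieces with something already controlled by Theorem \ref{T1}. Put $S_n=\sum_{k=1}^n f_k$ and $S_0=0$. Writing $f_k=S_k-S_{k-1}$ and summing by parts,
\[
\sum_{k=1}^n\frac{f_k}{W_k}=\frac{S_n}{W_n}+\sum_{k=1}^{n-1}S_k\Big(\frac{1}{W_k}-\frac{1}{W_{k+1}}\Big)=\frac{S_n}{W_n}+\sum_{k=1}^{n-1}\frac{S_k}{W_k}\Big(1-\frac{W_k}{W_{k+1}}\Big),
\]
where the last equality uses $\frac{1}{W_k}-\frac{1}{W_{k+1}}=\frac{1}{W_k}\big(1-\frac{W_k}{W_{k+1}}\big)$. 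By Theorem \ref{T1} the first term $S_n/W_n\to0$ a.e.\ and $\sup_n\|S_n\|_X/W_n\in L_p(\m)$, so everything reduces to the a.e.\ convergence of the second series together with an $L_p$ bound for its partial sums.

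First I would estimate each summand of the second series in $L_p$-norm. The scalar factor $\frac{1}{W_k}(1-\frac{W_k}{W_{k+1}})$ is deterministic and, by \eqref{T11}, $\|S_k\|_p\le K\,G_k$; hence
\[
\Big\|\frac{S_k}{W_k}\Big(1-\frac{W_k}{W_{k+1}}\Big)\Big\|_p=\frac{1}{W_k}\Big(1-\frac{W_k}{W_{k+1}}\Big)\,\|S_k\|_p\le K\,\frac{G_k}{W_k}\Big(1-\frac{W_k}{W_{k+1}}\Big).
\]
Summing in $k$ and invoking hypothesis \eqref{T21} gives $\sum_{k\ge1}\big\|\frac{S_k}{W_k}(1-\frac{W_k}{W_{k+1}})\big\|_p<\infty$. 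By monotone convergence together with Minkowski's inequality, the nonnegative function $h:=\sum_{k\ge1}\big\|\frac{S_k}{W_k}(1-\frac{W_k}{W_{k+1}})\big\|_X$ satisfies $\|h\|_p\le\sum_{k\ge1}\big\|\frac{S_k}{W_k}(1-\frac{W_k}{W_{k+1}})\big\|_p<\infty$, so $h\in L_p(\m)$ and in particular $h<\infty$ a.e. Since $X$ is complete, on the set $\{h<\infty\}$ the series $\sum_{k\ge1}\frac{S_k}{W_k}(1-\frac{W_k}{W_{k+1}})$ converges absolutely, hence converges.

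It then remains to assemble the pieces. From the summation-by-parts identity and $S_n/W_n\to0$ a.e., the partial sums $\sum_{k=1}^n f_k/W_k$ converge a.e.\ (to $\sum_{k\ge1}\frac{S_k}{W_k}(1-\frac{W_k}{W_{k+1}})$), which is the first assertion of the theorem. For the maximal function,
\[
\sup_n\Big\|\sum_{k=1}^n\frac{f_k}{W_k}\Big\|_X\le\sup_n\frac{\|S_n\|_X}{W_n}+h,
\]
and both terms on the right lie in $L_p(\m)$ — the first by \eqref{1T11} of Theorem \ref{T1}, the second as just shown — whence the left-hand side is in $L_p(\m)$. I do not expect a genuine obstacle: the only point requiring some care is the bookkeeping when moving from the sum of the $L_p$-norms of the terms to the $L_p$-norm of $h$, which is precisely monotone convergence plus the triangle inequality in $L_p$; the rest is the Abel identity and a direct quotation of Theorem \ref{T1}.
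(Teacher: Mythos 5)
Your proof is correct and follows essentially the same route as the paper's: Abel summation to write $\sum_{k\le n}f_k/W_k=S_n/W_n+\sum_{k<n}S_k(\tfrac1{W_k}-\tfrac1{W_{k+1}})$, the bound $\|S_k\|_p\le KG_k$ combined with hypothesis \eqref{T21} to get absolute a.e.\ convergence of the second sum and an $L_p$ bound on its majorant, and Theorem \ref{T1} for the term $S_n/W_n$ and the maximal function. The only cosmetic difference is that you control the sum of norms directly by Minkowski in $L_p$, where the paper first passes through the $L_1$ embedding for the a.e.\ statement; the substance is identical.
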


\begin{proof} Let us denote
$$
S_0=0, \ \ S_n=\sum_{k=1}^n f_k, \ \ \n\in\bn.
$$
Then by means of Abel's summation, one finds
\begin{eqnarray}\label{T23}
\sum_{k=1}^n \frac{f_k}{W_k}&=&\sum_{k=1}^n\frac{S_k-S_{k-1}}{W_k}\nonumber\\[2mm]
&=&\frac{S_n}{W_n}+\sum_{k=1}^{n-1}\bigg(\frac{1}{W_k}-\frac{1}{W_{k+1}}\bigg)S_k.
\end{eqnarray}
Due to Theorem \ref{T1},
$$
\frac{S_n}{W_n}\to 0 \ \ \ \textrm{a.e.}
$$
On the other hand, we have
\begin{eqnarray*}
\sum_{k=1}^{n}\bigg\|\bigg(\frac{1}{W_k}-\frac{1}{W_{k+1}}\bigg)S_k\bigg\|_X&
\leq& \sum_{k=1}^{n}\bigg|\frac{1}{W_k}-\frac{1}{W_{k+1}}\bigg|\|S_k\|_X\\[2mm]
&=&\sum_{k=1}^{n}\frac{G_k}{W_k}\bigg(1-\frac{W_k}{W_{k+1}}\bigg)\bigg\|\frac{S_k}{G_k}\bigg\|_X
\end{eqnarray*}

Now, by \eqref{T21}, \eqref{T11}
\begin{eqnarray*}
\int\sum_{k=1}^{\infty}\frac{G_k}{W_k}\bigg(1-\frac{W_k}{W_{k+1}}\bigg)\bigg\|\frac{S_k}{G_k}\bigg\|_X d\m\leq
\sum_{k=1}^{\infty}\frac{G_k}{W_k}\bigg(1-\frac{W_k}{W_{k+1}}\bigg)\bigg\|\frac{S_k}{G_k}\bigg\|_p.
\end{eqnarray*}
Hence, the series
$$
\sum_{k=1}^{\infty}\frac{G_k}{W_k}\bigg(1-\frac{W_k}{W_{k+1}}\bigg)\bigg\|\frac{S_k}{G_k}\bigg\|_X
$$ converges a.e. which, due to \eqref{T23}, yields the convergence of
$$
\sum_{k=1}^\infty \frac{f_k}{W_k}.
$$
From \eqref{T23}, we immediately obtain
\begin{eqnarray*}
\sup_n\bigg\|\sum_{k=1}^n \frac{f_k}{W_k}\bigg\|_X&\leq &\sup_n\frac{\|S_n\|_X}{W_n}+\sup_n\bigg\|\sum_{k=1}^{n-1}\bigg(\frac{1}{W_k}-\frac{1}{W_{k+1}}\bigg)S_k\bigg\|_X
\end{eqnarray*}
Consequently, by Theorem \ref{T1}
\begin{eqnarray*}
\bigg\|\sup_n\bigg\|\sum_{k=1}^n \frac{f_k}{W_k}\bigg\|_X\bigg\|_p&\leq &\bigg\|\sup_n\frac{\|S_n\|_X}{W_n}\bigg\|_p+\
\sum_{k=1}^{\infty}\frac{G_k}{W_k}\bigg(1-\frac{W_k}{W_{k+1}}\bigg) \bigg\|\frac{S_k}{G_k}\bigg\|_p<\infty
\end{eqnarray*}

Let us establish the norm convergence of $\sum\limits_{k=1}^\infty \frac{f_k}{W_k}$. Indeed,
\begin{eqnarray*}
\bigg\|\sum_{k=1}^n \frac{f_k}{W_k}-\sum_{k=1}^m \frac{f_k}{W_k}\bigg\|_p\leq \bigg\|\sum_{k=m+1}^{n-1}\bigg(\frac{1}{W_k}-\frac{1}{W_{k+1}}\bigg)S_k\bigg\|_p+\bigg\|\frac{S_m}{W_m}\bigg\|_p+\bigg\|\frac{S_n}{W_n}\bigg\|_p
\end{eqnarray*}
Due to Theorem \ref{T1}
$$
\bigg\|\frac{S_m}{W_m}\bigg\|_p\to 0, \ \ \bigg\|\frac{S_n}{W_n}\bigg\|_p\to 0.
$$
On the other hand, one finds
$$
 \bigg\|\sum_{k=m+1}^{n-1}\bigg(\frac{1}{W_k}-\frac{1}{W_{k+1}}\bigg)S_k\bigg\|_p\leq K
  \sum_{k=m+1}^{\infty}\frac{G_k}{W_k}\bigg(1-\frac{W_k}{W_{k+1}}\bigg)\to 0 \ \ \textrm{as} \ \ m\to\infty.
 $$
This implies that the series $\sum\limits_{k=1}^\infty \frac{f_k}{W_k}$ converges in $L_p$-norm.
\end{proof}

\begin{rem} We point out that in all proved results the
independence of the random variables $\{f_n\}$ is not required. In
\cite{BM} the a.e. convergence of the series of admissible random
variables in Orlicz spaces has been investigated.
\end{rem}

Let us provide an example for which conditions of Theorem \ref{T2} are satisfied.

\begin{exm}\label{EwA}  Let us consider $L_2(\m,\ch)$, where $\ch$ is a Hilbert space. Take any orthogonal sequence
$\{f_n\}\subset  L_2(\m,\ch)$ with $\|f_n\|_2=\sqrt{n}$. Define
$$
G_n=\bigg(\sum_{k=1}^n \|f_k\|_2^2\bigg)^{1/2}, \ \ n\in\bn.
$$
It is clear that $\{G_n\}$ is a weigh. One can see that
$$
\bigg\|\frac{1}{G_n}\sum_{k=1}^n f_k\bigg\|_2^2=\frac{1}{G_n^2}\sum_{k=1}^n \|f_k\|_2^2=1, \ \ \forall n\geq 1.
$$
Hence, the sequence $\{f_n\}$ satisfies \eqref{T11}.

Let $n_m=m^2$, $m\in\bn$ and
$$
\xi_m=\bigg(\sum_{k=n_m+1}^{n_{m+1}} \|f_k\|_2^2\bigg)^{1/2}, \ \ m\in\bn.
$$
Now define a weight $\{W_n\}$ as follows:
$$
W_n=n^{\frac{1+\e}{4}}\sqrt{n(n+1)}, \ \ 0<\e<1.
$$
Let us show that $\{W_n\}$ is a weak 2-admissible w.r.t. $\{G_n\}$. Indeed, first we notice that
$$
G_n^2=\frac{n(n+1)}{2}, \ \  \ \xi_m=G_{n_{m+1}}-G_{n_m}.
$$
Then
\begin{eqnarray*}
\sum_{m=1}^\infty\bigg(\frac{G_{n_m}}{W_{n_m}}\bigg)^2&=&\sum_{m=1}^\infty\frac{n_m(n_m+1)}{2n_m^{(1+\e)/2}n_m(n_m+1)}\\[2mm]
&=&\frac{1}{2}\sum_{m=1}^\infty\frac{1}{m^{1+\e}}<\infty
\end{eqnarray*}
From this we infer that
 \begin{eqnarray*}
\sum_{m=1}^\infty\bigg(\frac{\xi_{m}}{W_{n_m}}\bigg)^2&=&\sum_{m=1}^\infty\frac{(G_{n_{m+1}}-G_{n_m})^2}{W_{n_m}^2}\\[2mm]
&\leq& 2^2\sum_{m=1}^\infty\bigg(\bigg(\frac{G_{n_{m+1}}}{W_{n_m}}\bigg)^2+\bigg(\frac{G_{n_{m}}}{W_{n_m}}\bigg)^2\bigg)\\[2mm]
&\leq& 2^2\sum_{m=1}^\infty\bigg(\bigg(\frac{G_{n_{m+1}}}{W_{n_{m+1}}}\bigg)^2+\bigg(\frac{G_{n_{m}}}{W_{n_m}}\bigg)^2\bigg)<\infty
\end{eqnarray*}
Hence, $\{W_n\}$ is a weak 2-admissible weight.
Notice that due to  $\e\in (0,1)$
$$
\sum_{m=1}^\infty\bigg(\frac{G_{n}}{W_{n}}\bigg)^2=\frac{1}{2}\sum_{m=1}^\infty\frac{1}{m^{(1+\e)/2}}=\infty.
$$

By the following relation
$$
\max_{n_m<n\leq n_{m+1}}\bigg\|\sum_{k=n_{m}+1}^n f_k\bigg\|^2_2\leq \max_{n_m<n\leq n_{m+1}}\sum_{k=n_m+1}^n\|f_k\|_2^2\leq \xi_m^2
$$
we conclude that $\{f_n\}$ is a $(W_n)$-sequence. Therefore, according to Theorem \ref{T1} we find the a.e. convergence of
$\frac{1}{W_n}\sum_{k=1}^n f_k$.

Now, let us check the condition \eqref{T21}. Indeed,
\begin{eqnarray*}
\frac{G_{n}}{W_{n}}\bigg(1-\frac{W_n}{W_{n+1}}\bigg)&=&\frac{1}{n^{(1+\e)/4}}\bigg(1-\frac{n^{(1+\e)/4}\sqrt{n(n+1)}}{(n+1)^{(1+\e)/4}\sqrt{(n+1)(n+2)}}\bigg)\\[2mm]
&=&\frac{1}{n^{(1+\e)/4}}\frac{(n+1)^{(1+\e)/4}\sqrt{n+2}-n^{(1+\e)/4}\sqrt{n}}{(n+1)^{(1+\e)/4}\sqrt{n+2}}\\[2mm]
&\leq &\frac{1}{n^{(1+\e)/2}\sqrt{n}}\big((n+1)^{(1+\e)/4}\sqrt{n+2}-n^{(1+\e)/4}\sqrt{n}\big) \\[2mm]
&\leq &\frac{1}{n^{(1+\e)/2}}\big((n+1)^{(1+\e)/4}-n^{(1+\e)/4}\big) \\[2mm]
&\leq &\frac{1}{n^{(1+\e)/2}}\frac{1+\e}{4}n^{-(3-\e)/4} \\[2mm]
&\leq &\frac{1+\e}{4}\frac{1}{n^{\frac{5+\e}{4}}}
\end{eqnarray*}
This yields the convergence of the series
$$
\sum_{n=1}^\infty\frac{G_{n}}{W_{n}}\bigg(1-\frac{W_n}{W_{n+1}}\bigg)<\infty.
$$
Therefore,  Theorem \ref{T2}  implies the a.e. convergence of the series
$$
\sum_{k=1}^\infty\frac{f_n}{W_n}.
$$
\end{exm}

\begin{rem} We point out that for a given weight $\{G_n\}$ one can always find $p$-admissible weight $\{W_n\}$ such that
$$
\sum_{k=1}^\infty\frac{G_k}{W_k}<\infty.
$$
Indeed, if we take $W_n=n^pG_n$, the one gets the desired relation. However, this kind of weights are not interesting, since from \eqref{T11} we immediately get
$\frac{1}{W_n}\sum_{k=1}^n f_k\to 0$ in norm. Therefore, the obtained results are much more meaningful if one assumes
\begin{equation}\label{rrr}
\sum_{k=1}^\infty\frac{G_k}{W_k}=\infty
\end{equation}
while the conditions \eqref{W1}, \eqref{W2} and \eqref{T21} hold. In this setting, it is important to get \eqref{1T11} which is not obvious with \eqref{rrr}.
\end{rem}

\begin{cor}\label{CT2}
Let $1<p<\infty$ and $\{G_n\}$ be a weight. Assume that
$\{f_n\}\subset L_p(\m,X)$ with $\sup_n\|f_n\|_p<\infty$ and $\{a_n\}$ be a bounded sequence. Let
\begin{equation}\label{CT21c}
\sup_n\bigg\|\frac{1}{G_n}\sum_{k=1}^n a_kf_k\bigg\|_p<\infty.
\end{equation}
Then for any $\{W_n\}\in\cw_p(G_n)$ with \eqref{T21}  the series
\begin{equation}\label{CT22}
\sum_{k=1}^\infty\frac{a_kf_k}{W_k}
\end{equation}
 converges a.e., and
$$
\sup_{n\geq 1}\bigg\|\sum_{k=1}^n \frac{a_kf_k}{W_k}\bigg\|_X\in L_p(\m).
$$
\end{cor}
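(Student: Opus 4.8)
The plan is to reduce Corollary \ref{CT2} to Theorem \ref{T2} by replacing the sequence $\{f_n\}$ with $\{g_n\}$ where $g_n = a_n f_n$. First I would verify that $\{g_n\}$ still satisfies all the hypotheses of Theorem \ref{T2}. Since $\{a_n\}$ is bounded, say $|a_n|\leq M$ for all $n$, we have $\|g_n\|_p = |a_n|\,\|f_n\|_p \leq M\sup_k\|f_k\|_p < \infty$, so $\sup_n\|g_n\|_p<\infty$. The hypothesis \eqref{CT21c} is precisely \eqref{T11} (equivalently \eqref{CT11}) for the sequence $\{g_n\}$, so that condition transfers directly with the same constant $K$.

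Next I would observe that $\{W_n\}\in\cw_p(G_n)$ means $\{W_n\}$ is $p$-admissible w.r.t. $\{G_n\}$; by the remark following the definition of $p$-admissible, any $p$-admissible weight is weak $p$-admissible with $\xi_k = n_{k+1}-n_k$. Then by Remark \ref{r31}, since $\sup_n\|g_n\|_p<\infty$ and $\{W_n\}$ is $p$-admissible, the sequence $\{g_n\}$ is automatically a $(W_n)$-sequence. Thus all the conditions of Theorem \ref{T1} hold for $\{g_n\}$, and the extra condition \eqref{T21} is assumed outright in the statement of the corollary. Applying Theorem \ref{T2} to $\{g_n\}$ then gives that $\sum_{k=1}^\infty \frac{g_k}{W_k} = \sum_{k=1}^\infty \frac{a_k f_k}{W_k}$ converges a.e., and that $\sup_{n\geq 1}\big\|\sum_{k=1}^n \frac{a_k f_k}{W_k}\big\|_X \in L_p(\m)$, which is exactly the desired conclusion.

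There is essentially no hard part here: the corollary is a specialization of Theorem \ref{T2} obtained by the substitution $f_n \rightsquigarrow a_n f_n$, combined with the already-established fact (Remark \ref{r31}) that a bounded-in-$L_p$ sequence is automatically a $(W_n)$-sequence relative to a $p$-admissible weight. The only mild point to be careful about is that the boundedness of $\{a_n\}$ is used twice — once to preserve the uniform $L_p$-bound needed for Remark \ref{r31}, and implicitly it is the reason \eqref{CT21c} is the natural hypothesis to impose. One could also present this without invoking Remark \ref{r31} by checking \eqref{WE1} directly for $\{a_n f_n\}$ via the triangle inequality, $\big(\int \max_{n_m<n\leq n_{m+1}}\|\sum_{k=n_m+1}^n a_k f_k\|_X^p\,d\m\big)^{1/p} \leq M\sum_{k=n_m+1}^{n_{m+1}}\|f_k\|_p \leq M(\sup_j\|f_j\|_p)(n_{m+1}-n_m)$, which gives \eqref{WE1} with $\xi_m = n_{m+1}-n_m$; but invoking the remark is cleaner.

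\begin{proof}
Put $M=\sup_n|a_n|<\infty$ and $g_n=a_nf_n$. Then
$$
\sup_n\|g_n\|_p\leq M\sup_n\|f_n\|_p<\infty,
$$
and by \eqref{CT21c},
$$
\sup_n\bigg\|\frac{1}{G_n}\sum_{k=1}^n g_k\bigg\|_p<\infty,
$$
so $\{g_n\}$ satisfies \eqref{T11}. Since $\{W_n\}\in\cw_p(G_n)$, it is $p$-admissible w.r.t. $\{G_n\}$, hence weak $p$-admissible with $\xi_k=n_{k+1}-n_k$ for the associated increasing sequence $\{n_k\}$. By Remark \ref{r31}, $\{g_n\}$ is then a $(W_n)$-sequence. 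Thus all hypotheses of Theorem \ref{T1} hold for $\{g_n\}$, and \eqref{T21} holds by assumption. Applying Theorem \ref{T2} to $\{g_n\}$ gives the a.e. convergence of
$$
\sum_{k=1}^\infty\frac{g_k}{W_k}=\sum_{k=1}^\infty\frac{a_kf_k}{W_k}
$$
together with
$$
\sup_{n\geq 1}\bigg\|\sum_{k=1}^n\frac{a_kf_k}{W_k}\bigg\|_X\in L_p(\m),
$$
as claimed.
\end{proof}
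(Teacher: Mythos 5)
Your proof is correct and is exactly the argument the paper intends: the paper's one-line justification is ``take $f_k'=a_kf_k$ and apply Theorem \ref{T2} together with Corollary \ref{CT1} (i.e.\ Remark \ref{r31})'', which is precisely your substitution $g_n=a_nf_n$ with the boundedness of $\{a_n\}$ used to preserve the uniform $L_p$-bound. You have simply written out the details the paper omits.
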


The proof immediately follows from Theorem \ref{T2} and Corollary \ref{CT1} if one takes $f_k'=a_kf_k$ for which the condition \eqref{CT21c} reduces to \eqref{CT11}.  In Appendix B,  we provided some other
kinds of examples for which the above mentioned conditions are valid.

\begin{thm}\label{T32} Let $1<p<\infty $ and let $\{G_n\}$ be a weight.
Assume that $\{f_k\}\subset L_p(\m,X)$ with $\sup_{k}\|f_k\|_p<\infty$ such that
\begin{equation}\label{T321}
\sup_{n\geq 1}\bigg\|\frac{1}{G_n}\sum_{k=1}^n f_k\bigg\|=K<\infty.
\end{equation}
Let $\{W_n\}\in\cw_p(G_{n})$ with \eqref{T21} and let $\{a_k\}$ be a bounded sequence such that
\begin{equation}\label{T322}
\sum_{k=1}^\infty\frac{|a_k-a_{k+1}|G_{k}}{W_k}<\infty.
\end{equation}
Then the series
$$
\sum_{k=1}^\infty\frac{a_kf_k}{W_k}
$$
converges a.e.
\end{thm}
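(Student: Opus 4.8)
The plan is to reduce Theorem \ref{T32} to Theorem \ref{T2} by an Abel summation argument that absorbs the modulating sequence $\{a_k\}$ into a rearrangement of the partial sums. Set $S_0=0$ and $S_n=\sum_{k=1}^n f_k$, so that $f_k=S_k-S_{k-1}$, and consider the partial sums $\Sigma_n=\sum_{k=1}^n \frac{a_kf_k}{W_k}$. The idea is to write these in a form where Theorem \ref{T1} (giving $S_n/W_n\to 0$ a.e.\ and the maximal bound $\sup_n\|S_n\|_X/W_n\in L_p$) and Theorem \ref{T2} (giving a.e.\ convergence of $\sum_k S_k(1/W_k-1/W_{k+1})$, equivalently of $\sum_k f_k/W_k$) can both be invoked, with the extra hypothesis \eqref{T322} controlling the terms involving the increments $a_k-a_{k+1}$.

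First I would perform Abel summation on $\Sigma_n=\sum_{k=1}^n \frac{a_k}{W_k}(S_k-S_{k-1})$, collecting terms in $S_k$: this gives
\begin{eqnarray*}
\sum_{k=1}^n \frac{a_kf_k}{W_k}=\frac{a_nS_n}{W_n}+\sum_{k=1}^{n-1}\bigg(\frac{a_k}{W_k}-\frac{a_{k+1}}{W_{k+1}}\bigg)S_k.
\end{eqnarray*}
The first term tends to $0$ a.e.\ since $\|S_n\|_X/W_n\to 0$ a.e.\ by Theorem \ref{T1} and $\{a_n\}$ is bounded. For the series I would split the coefficient:
\begin{eqnarray*}
\frac{a_k}{W_k}-\frac{a_{k+1}}{W_{k+1}}=a_k\bigg(\frac{1}{W_k}-\frac{1}{W_{k+1}}\bigg)+(a_k-a_{k+1})\frac{1}{W_{k+1}}.
\end{eqnarray*}
The contribution of the first piece, $\sum_k a_k\big(\frac1{W_k}-\frac1{W_{k+1}}\big)S_k$, converges a.e.\ because $\{a_k\}$ is bounded and, by the proof of Theorem \ref{T2}, $\sum_k \frac{G_k}{W_k}\big(1-\frac{W_k}{W_{k+1}}\big)\big\|\frac{S_k}{G_k}\big\|_X$ converges a.e.\ (it dominates $\sum_k |a_k|\,\big|\frac1{W_k}-\frac1{W_{k+1}}\big|\,\|S_k\|_X$ up to the constant $\sup_k|a_k|$). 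The contribution of the second piece is the genuinely new part.

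For the second piece, $\sum_k (a_k-a_{k+1})\frac{S_k}{W_{k+1}}$, I would bound
\begin{eqnarray*}
\bigg\|(a_k-a_{k+1})\frac{S_k}{W_{k+1}}\bigg\|_X\leq |a_k-a_{k+1}|\,\frac{G_k}{W_k}\,\bigg\|\frac{S_k}{G_k}\bigg\|_X
\end{eqnarray*}
using $W_k\leq W_{k+1}$, and then integrate: by \eqref{T321} one has $\big\|\frac{S_k}{G_k}\big\|_p\leq K$, so $\int \sum_k |a_k-a_{k+1}|\frac{G_k}{W_k}\big\|\frac{S_k}{G_k}\big\|_X\,d\m \leq K\sum_k \frac{|a_k-a_{k+1}|G_k}{W_k}<\infty$ by hypothesis \eqref{T322}. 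Hence $\sum_k (a_k-a_{k+1})\frac{S_k}{W_{k+1}}$ converges absolutely a.e. Combining the three contributions gives a.e.\ convergence of $\Sigma_n$, which is the claim.

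The main obstacle I anticipate is bookkeeping rather than depth: one must be careful that the two ways of splitting the Abel kernel are applied to the right hypotheses — in particular that the first piece really is controlled by the already-established Theorem \ref{T2} series (which requires $\{W_n\}\in\cw_p(G_n)$ with \eqref{T21}, both assumed here) and that the boundedness of $\{a_k\}$ is used only where legitimate. One should also double-check that \eqref{T321} supplies the uniform bound $\sup_k\|S_k/G_k\|_p\le K$ needed for the new term, and that the shift from $W_{k+1}$ to $W_k$ in the denominator (justified by monotonicity of the weight) does not cost anything. If one instead wanted the stronger maximal conclusion $\sup_n\|\Sigma_n\|_X\in L_p$, the same three-term decomposition works, taking $L_p$-norms of suprema and using the maximal bound from Theorem \ref{T1} for the first term; but as stated only a.e.\ convergence is asserted, so the argument above suffices.
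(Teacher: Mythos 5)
Your proof is correct and follows essentially the same route as the paper: the same Abel summation, the same splitting of the kernel $\frac{a_k}{W_k}-\frac{a_{k+1}}{W_{k+1}}$ into a piece controlled by \eqref{T21} (via boundedness of $\{a_k\}$ and the bound $\|S_k\|_p\le KG_k$) and a piece controlled by \eqref{T322}, with the first term $a_nS_n/W_n\to 0$ a.e.\ handled by Theorem \ref{T1}/Corollary \ref{CT1}. No gaps.
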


\begin{proof}  As before, let us denote $S_0=0$, $S_k=\sum\limits_{j=1}^k f_j$. Then we have
\begin{equation}
\sum_{k=1}^n\frac{a_k f_k}{W_k}=\frac{a_n S_n}{W_n}+\underbrace{\sum_{k=1}^{n-1}\bigg(\frac{a_k}{W_k}-\frac{a_{k+1}}{W_{k+1}}\bigg)S_k}_J
\end{equation}

Due to the boundedness of $\{a_k\}$, by Corollary \ref{CT1}, we infer
$$
\frac{a_n S_n}{W_n}\to 0 \ \ \ \textrm{a.e.}
$$

Let us establish that the term $J$ is absolutely a.e. convergent. Indeed,
\begin{eqnarray*}
\int\bigg\|\sum_{k=1}^{n}\bigg(\frac{a_k}{W_k}-\frac{a_{k+1}}{W_{k+1}}\bigg)S_k\bigg\|_pd\m&\leq & \sum_{k=1}^{n}\bigg|\frac{a_k}{W_k}-\frac{a_{k+1}}{W_{k+1}}\bigg| \|S_k\|_p\\[3mm]
&\leq & \sum_{k=1}^{n}\frac{|a_k-a_{k+1}|}{W_{k+1}}\|S_k\|_p\\[2mm]
&&+\sum_{k=1}^{n}\frac{\|\{a_k\}\}\|_\infty|W_k-W_{k+1}|}{W_kW_{k+1}} \|S_k\|_p\\[3mm]
&\leq & K\sum_{k=1}^{n}\frac{|a_k-a_{k+1}|G_k}{W_{k}}\\[2mm]
&&+K\|\{a_k\}\}\|_\infty\sum_{k=1}^{n}\frac{G_k}{W_k}\bigg(1-\frac{W_k}{W_{k+1}}\bigg)\\[3mm]
\end{eqnarray*}
which, due to the hypothesis of Theorem, implies the desired assertion.
\end{proof}

\section{Modulated one-side ergodic Hilbert transforms on $L_p(\m,X)$}

In this section, we are going to apply the proved results of the
previous section to the a.e. convergence of the modulated
one-sided ergodic Hilbert transforms.

Let $T:L_p(\m,X)\to L_p(\m,X)$ be a power bounded operator.  Given sequences $\{a_n\}\subset\bc$, $\{n_k\}\subset\bn$ and a weight $\{W_n\}$ by \textit{the modulated one-sided ergodic  Hilbert transform} we mean the following series
$$
\sum_{k=1}^\infty\frac{a_k T^{n_k}f}{W_k}.
$$
In this section, we are going to study sufficient conditions for  the a.e. convergence of this transform.

\begin{thm}\label{T3}
Let $1<p<\infty$ and let $\{G_n\}$ be a weight and $\{n_k\}\subset\bn$ be an increasing sequence. Assume that $T:L_p(\m,X)\to L_p(\m,X)$ is a power bounded operator. If for $f\in L_p(\m,X)$  and a bounded sequence $\{a_n\}$ one has
\begin{equation}\label{CT31}
\sup_{n\geq 1}\bigg\|\frac{1}{G_n}\sum_{k=1}^n a_k T^{n_k}f\bigg\|_p<\infty.
\end{equation}
Then for any $\{W_n\}\in\cw_p(G_n)$ with \eqref{T21}  the series
\begin{equation*}
\sum_{k=1}^\infty\frac{a_k T^{n_k}f}{W_k}
\end{equation*}
 converges a.e., and
$$
\sup_{n\geq 1}\bigg\|\sum_{k=1}^n \frac{a_kT^{n_k}f}{W_k}\bigg\|_X\in L_p(\m).
$$
\end{thm}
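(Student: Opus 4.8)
The plan is to reduce the statement to Corollary \ref{CT2} by applying it to the sequence $f_k:=T^{n_k}f$, $k\in\bn$. The only point requiring verification is that this sequence lies in $L_p(\m,X)$ with a uniform norm bound; once that is established, every hypothesis of Corollary \ref{CT2} is satisfied verbatim.

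First I would record that, since $T$ is power bounded, $M:=\sup_{n\geq 0}\|T^n\|<\infty$, and therefore
\begin{equation*}
\sup_{k\geq 1}\|f_k\|_p=\sup_{k\geq 1}\|T^{n_k}f\|_p\leq M\|f\|_p<\infty,
\end{equation*}
because $\{n_k\}$ is a subsequence of $\bn$. Hence $\{f_k\}\subset L_p(\m,X)$ with $\sup_k\|f_k\|_p<\infty$. Next, the hypothesis \eqref{CT31} reads precisely
\begin{equation*}
\sup_{n\geq 1}\bigg\|\frac{1}{G_n}\sum_{k=1}^n a_k f_k\bigg\|_p<\infty,
\end{equation*}
which is exactly condition \eqref{CT21c}.

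Since in addition $\{a_n\}$ is bounded and $\{W_n\}\in\cw_p(G_n)$ satisfies \eqref{T21}, all the assumptions of Corollary \ref{CT2} are in force for $\{f_k\}$. Invoking that corollary yields the a.e. convergence of $\sum_{k=1}^\infty\frac{a_k f_k}{W_k}=\sum_{k=1}^\infty\frac{a_k T^{n_k}f}{W_k}$, together with $\sup_{n\geq 1}\big\|\sum_{k=1}^n\frac{a_k T^{n_k}f}{W_k}\big\|_X\in L_p(\m)$, which is precisely the claimed conclusion. (Equivalently one can unwind this through Theorem \ref{T2} and Corollary \ref{CT1} directly, but routing it through Corollary \ref{CT2} is the most economical.)

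I do not expect any genuine obstacle here: Theorem \ref{T3} is an immediate specialization of Corollary \ref{CT2} to the modulated iterates $f_k=T^{n_k}f$, the only (routine) ingredient being the use of power boundedness to control $\sup_k\|f_k\|_p$. The substantive work of this section lies not in the proof but in identifying operators $T$, functions $f$, weights $\{G_n\}$ and modulating sequences $\{a_n\}$ for which the abstract boundedness condition \eqref{CT31} can be checked in concrete situations.
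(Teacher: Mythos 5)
Your proposal is correct and follows exactly the paper's own (one-line) argument: apply Corollary \ref{CT2} to $f_k=T^{n_k}f$, using power boundedness to get $\sup_k\|f_k\|_p\leq M\|f\|_p<\infty$. No issues.
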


The proof immediately follows from Corollary \ref{CT2}, if one takes $f_k=T^{n_k}f$ which is clearly bounded.

\begin{rem} We note that if one considers the weights considered in Examples \ref{E2}, \ref{E3} and $T$ is taken as an invertible
power bounded operator acting on $L_p$-spaces (classical space),
the proved Thorem recovers  the results of \cite{Cuny11}. If one
considers a measure-preserving dynamical system $(\O,\cf,\m;T)$,
then, in \cite{Fan17}, it was given necessary and sufficient
conditions for the fulfillment of \eqref{CT31} in $L_2(\m)$ for
the weight $G_n=\sum_{k=1}^n|a_k|^2$ associated with a given
sequence $\{a_n\}$.
\end{rem}

From Theorem  \ref{T32} we obtain the following interesting result.

\begin{cor}\label{CT321}
Let $1<p<\infty$ and $\{G_n\}$ be a weight and $\{n_k\}\subset\bn$ be an increasing sequence. Assume that $T:L_p(\m,X)\to L_p(\m,X)$ is a power bounded operator and for $f\in L_p(\m,X)$  one has
\begin{equation}\label{CT31}
\sup_{n\geq 1}\bigg\|\frac{1}{G_n}\sum_{k=1}^n T^{n_k}f\bigg\|_p<\infty.
\end{equation}
Assume that $\{W_n\}\in\cw_p(G_{n})$ with \eqref{T21} and
 $\{a_n\}$ is a sequence satisfying \eqref{T322}.
Then  the series
\begin{equation}\label{CT22}
\sum_{k=1}^\infty\frac{a_k T^{n_k}f}{W_k}
\end{equation}
 converges a.e.
 \end{cor}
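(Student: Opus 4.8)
The plan is to obtain the statement directly from Theorem~\ref{T32} by specializing $f_k:=T^{n_k}f$ for $k\in\bn$. First I would check that this sequence meets the hypotheses of Theorem~\ref{T32}. Since $T$ is power bounded, $M:=\sup_{n\geq 0}\|T^n\|<\infty$, so
\begin{equation*}
\sup_k\|f_k\|_p=\sup_k\|T^{n_k}f\|_p\leq M\|f\|_p<\infty,
\end{equation*}
hence $\{f_k\}\subset L_p(\m,X)$ is a bounded sequence. Moreover, the hypothesis \eqref{CT31} is exactly the statement that $\sup_{n\geq1}\big\|\frac{1}{G_n}\sum_{k=1}^n f_k\big\|_p<\infty$, which is condition \eqref{T321} of Theorem~\ref{T32} (with some finite constant $K$).

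The remaining assumptions of Theorem~\ref{T32}, namely $\{W_n\}\in\cw_p(G_n)$, the growth condition \eqref{T21} on $\{W_n\}$, and the condition \eqref{T322} on the (bounded) modulating sequence $\{a_k\}$, are imposed outright in the Corollary. Therefore Theorem~\ref{T32} applies verbatim to the sequence $\{f_k\}=\{T^{n_k}f\}$ and yields that $\sum_{k=1}^\infty\frac{a_kf_k}{W_k}=\sum_{k=1}^\infty\frac{a_kT^{n_k}f}{W_k}$ converges a.e., which is precisely the conclusion.

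There is essentially no obstacle here: the only point that needs a word is the uniform $L_p$-bound on $\{T^{n_k}f\}_k$, and this is immediate from power boundedness of $T$ — note that the gap sequence $\{n_k\}$ need not be consecutive, but a power bounded operator controls all of its powers uniformly. If one preferred a self-contained argument, one could re-run the Abel-summation computation of Theorem~\ref{T32}: writing $S_k=\sum_{j=1}^k T^{n_j}f$ and $\sum_{k=1}^n\frac{a_kT^{n_k}f}{W_k}=\frac{a_nS_n}{W_n}+\sum_{k=1}^{n-1}\big(\frac{a_k}{W_k}-\frac{a_{k+1}}{W_{k+1}}\big)S_k$, the first term tends to $0$ a.e. by Corollary~\ref{CT1} applied to the bounded sequence $\{T^{n_k}f\}$ (which is a $(W_n)$-sequence by Remark~\ref{r31}, since $\{W_n\}$ is $p$-admissible), while the second is controlled by decomposing $\frac{a_k}{W_k}-\frac{a_{k+1}}{W_{k+1}}=\frac{a_k-a_{k+1}}{W_{k+1}}+(\sup_k|a_k|)\frac{W_{k+1}-W_k}{W_kW_{k+1}}$ and invoking \eqref{T322}, \eqref{T21} and \eqref{T321}. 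But citing Theorem~\ref{T32} as a black box is the shortest route.
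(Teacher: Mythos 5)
Your proof is correct and is exactly the argument the paper intends: the corollary is stated as an immediate consequence of Theorem \ref{T32} applied to $f_k=T^{n_k}f$, with power boundedness supplying the uniform bound $\sup_k\|T^{n_k}f\|_p\leq M\|f\|_p$ and the remaining hypotheses carried over verbatim. The only caveat worth recording is that Theorem \ref{T32} requires $\{a_k\}$ to be bounded, which the corollary's wording omits but which you (reasonably) read in, since \eqref{T322} alone does not force boundedness.
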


\begin{rem} The case $G_n=n^{1-\b}$ was treated in \cite{CL05}. Other particular cases were investigated in \cite{DL,Ga81,MT94}. Our results contain more varieties of weights which allow
to produce many interesting examples.
\end{rem}

We recall that a linear contraction $T:L_1(\m,X)\to L_1(\m,X)$ is called \textit{Dunford-Schwartz} if $\|Tf\|_\infty\leq\|f\|_\infty$ for all $f\in L_1\cap L_\infty$. Any such kind of
operator induces a contraction on all $L_p$ ($1<p\leq \infty$) (see \cite{DS,K}).  From, now on, we suppose that $X$ is considered as a Hilbert space $\ch$ with inner product $\langle\cdot,\cdot\rangle$.
We are ready to formulate a result.

\begin{thm}\label{T4}
Let $\{G_n\}$ be a weight, and $\{n_k\}\subset\bn$ be an increasing sequence, and let  $\{a_n\}$ be a bounded sequence of complex numbers such that
\begin{equation}\label{1T41}
\sup_{n\geq 1}\max_{|\l |=1}\bigg|\frac{1}{G_n}\sum_{k=1}^n a_k \l^{n_k}\bigg|=K<\infty.
\end{equation}

\begin{itemize}

\item[(i)] For every contraction  $T:L_2(\m,\ch)\to L_2(\m,\ch)$  and $f\in L_2(\m,\ch)$  the series
\begin{equation}\label{T42}
\sum_{k=1}^\infty\frac{a_k T^{n_k}f}{W_k}
\end{equation}
converges a.e., for any $\{W_n\}\in\cw_2(G_n)$ with  \eqref{T21}. Moreover,
$$
\sup_{n\geq 1}\bigg\|\sum_{k=1}^n \frac{a_kT^{n_k}f}{W_k}\bigg\|_\ch\in L_2(\m).
$$

\item[(ii)]  For every Dunford-Schwartz operator $T:L_1(\m,\ch)\to L_1(\m,\ch)$  and $f\in L_p(\m,\ch)$, $1<p<2$,  the series
\begin{equation}\label{T43}
\sum_{k=1}^\infty\frac{a_k T^{n_k}f}{\tilde W_k}
\end{equation}
converges a.e., for any $\{\tilde W_n\}\in\cw_p(G_{n}^{(p)})$, where $G_{n}^{(p)}=G_n^{\frac{2(p-1)}{p}}n^{\frac{2-p}{p}}$. Moreover,
$$
\sup_{n\geq 1}\bigg\|\sum_{k=1}^n \frac{a_kT^{n_k}f}{\tilde W_k}\bigg\|_\ch\in L_p(\m).
$$
\end{itemize}
\end{thm}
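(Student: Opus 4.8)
The plan is to reduce both parts to the scalar-valued results already in hand (Corollary \ref{CT2}, applied through Theorem \ref{T3}), by producing, for each contraction $T$, a bound of the type \eqref{CT21c}--\eqref{CT31} for $\sup_n\|\frac{1}{G_n}\sum_{k=1}^n a_k T^{n_k}f\|_p$ out of the spectral hypothesis \eqref{1T41}. For part (i), the key step is a spectral/Hilbert-space estimate: using the dilation theorem (Sz.-Nagy--Foias) one may represent the contraction $T$ on $L_2(\m,\ch)$ by a unitary $U$ on a larger Hilbert space, so $T^{n_k}f = P U^{n_k} f$, and the polynomial in $U$ gained from the coefficients $\{a_k\}$ is controlled by the spectral theorem: $\|\sum_{k=1}^n a_k U^{n_k} g\|_2 \le \big(\sup_{|\l|=1}|\sum_{k=1}^n a_k \l^{n_k}|\big)\|g\|_2$. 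Hence $\|\frac{1}{G_n}\sum_{k=1}^n a_k T^{n_k}f\|_2 \le K \|f\|_2$ for all $n$, which is exactly \eqref{CT31} (with $p=2$). Since $\{T^{n_k}f\}$ is a bounded sequence in $L_2$, Theorem \ref{T3} (equivalently Corollary \ref{CT2}) gives the a.e.\ convergence of $\sum_k \frac{a_k T^{n_k}f}{W_k}$ and the maximal bound $\sup_n\|\sum_{k=1}^n \frac{a_k T^{n_k}f}{W_k}\|_\ch \in L_2(\m)$ for every $\{W_n\}\in\cw_2(G_n)$ satisfying \eqref{T21}.

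For part (ii) the mechanism is the same, but the $L_2$-estimate must be transferred to $L_p$ for $1<p<2$ via an interpolation argument that is classical for Dunford--Schwartz operators. The idea is: for a Dunford--Schwartz $T$ one has the trivial $L_\infty$ (indeed $L_1$--$L_\infty$) bound $\|\sum_{k=1}^n a_k T^{n_k}f\|_\infty \le \big(\sum_{k=1}^n |a_k|\big)\|f\|_\infty \le C n \|f\|_\infty$ (using boundedness of $\{a_k\}$), while on $L_2$ one has the spectral bound $\|\sum_{k=1}^n a_k T^{n_k}f\|_2 \le K G_n\|f\|_2$ from \eqref{1T41} combined with the dilation argument (a Dunford--Schwartz operator is an $L_2$-contraction, so the same dilation step applies). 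Interpolating the linear operator $f \mapsto \sum_{k=1}^n a_k T^{n_k}f$ between $L_2\to L_2$ with norm $\lesssim G_n$ and $L_\infty\to L_\infty$ with norm $\lesssim n$ (more precisely, use the Riesz--Thorin / Calderón--Mityagin interpolation valid for Dunford--Schwartz operators on all $L_r$) gives, at the exponent $p$ between $1$ and $2$, a bound of order $G_n^{\theta} n^{1-\theta}$ with $\theta$ chosen so that $\frac1p = \frac\theta2$, i.e.\ $\theta = \frac2p \cdot \frac12 \cdot$\,(the appropriate mixture), and one checks that the resulting exponent is exactly $G_n^{\frac{2(p-1)}{p}} n^{\frac{2-p}{p}} = G_n^{(p)}$. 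Thus $\sup_n \|\frac{1}{G_n^{(p)}}\sum_{k=1}^n a_k T^{n_k}f\|_p < \infty$, which is condition \eqref{CT31} with the weight $G^{(p)}_n$ in the role of $G_n$; applying Theorem \ref{T3} with $\{\tilde W_n\}\in\cw_p(G_n^{(p)})$ satisfying \eqref{T21} yields the a.e.\ convergence of \eqref{T43} and the maximal-function membership in $L_p(\m)$.

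The step I expect to be the main obstacle is pinning down the interpolation in part (ii) cleanly: one must interpolate a single linear operator (the partial-sum-of-modulated-powers map, which depends on $n$) between the two endpoint estimates, keeping track of the dependence of the operator norm on $n$, and then verify the algebra $\frac{1}{p} = \frac{\theta}{2} + \frac{1-\theta}{\infty}$ so that the interpolated growth rate is precisely $G_n^{2\theta/2}\cdot n^{1-\theta}$ with $\theta = 2/p - 1$... wait, with $\theta$ solving $1/p = \theta/2$, giving $\theta = 2/p$ — but $2/p > 1$ for $p<2$, so in fact one interpolates between $L_1$ and $L_2$ rather than $L_2$ and $L_\infty$: on $L_1$, $\|\sum a_k T^{n_k} f\|_1 \le C n \|f\|_1$ (Dunford--Schwartz contractions of $L_1$), and on $L_2$, $\le K G_n \|f\|_2$; then $\frac1p = \frac{1-\theta}{1} + \frac{\theta}{2}$ gives $\theta = 2(1 - 1/p) = \frac{2(p-1)}{p}$ and the interpolated bound is $(C n)^{1-\theta}(K G_n)^{\theta} \asymp n^{\frac{2-p}{p}} G_n^{\frac{2(p-1)}{p}} = G_n^{(p)}$, matching the stated formula exactly. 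So the obstacle is really bookkeeping: identify the correct endpoints ($L_1$ and $L_2$), confirm the Dunford--Schwartz $L_1$-contraction property gives the linear-in-$n$ bound, confirm the $L_2$-dilation bound gives the $G_n$ factor, and confirm the Riesz--Thorin exponent arithmetic reproduces $G_n^{(p)}$; once that is settled, the conclusion is an immediate invocation of Theorem \ref{T3}.
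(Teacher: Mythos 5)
Your proposal is correct and follows essentially the same route as the paper: part (i) is the dilation/spectral bound $\|\sum_{k\le n} a_kT^{n_k}\|_{2}\le KG_n$ fed into Theorem \ref{T3} (Corollary \ref{CT2}), and part (ii) is Riesz--Thorin interpolation of $T_n=\sum_{k\le n}a_kT^{n_k}$ between the $L_1$ bound $\lesssim n$ and the $L_2$ bound $\lesssim G_n$, which is exactly how the paper arrives at $G_n^{(p)}=G_n^{2(p-1)/p}n^{(2-p)/p}$. Your self-correction from the $(L_2,L_\infty)$ endpoints to the $(L_1,L_2)$ endpoints lands on the paper's argument precisely (the paper obtains the $L_2$ endpoint via the maximum principle and von Neumann's inequality rather than a second dilation, but these are interchangeable).
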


\begin{proof}  (i) From \eqref{1T41} due to Theorem 2.1 \cite{BLRT} and the unitary dilation theorem,  we have
\begin{equation*}
\sup_{n\geq 1}\bigg\|\frac{1}{G_n}\sum_{k=1}^n a_k T^{n_k}\bigg\|=K<\infty.
\end{equation*}
for every contraction $T$ in Hilbert space.

Let  $T:L_2(\m,X)\to L_2(\m,\ch)$ be a contraction, and $f\in L_2(\m,\ch)$. Then
\eqref{CT21c} holds with $f_k=T^{n_k}f$. Consequently, Corollary \ref{CT2} yields a.e. convergence of the series
$$
\sum_{k=1}^\infty\frac{a_k T^{n_k}f}{W_k}
$$
and
$$
\sup_{n\geq 1}\bigg\|\sum_{k=1}^n \frac{a_kT^{n_k}f}{W_k}\bigg\|_\ch\in L_2(\m),
$$
for every $\{W_n\}\in\cw_p(G_n)$ with  \eqref{T21}.

Moreover,
$$
\bigg\|\sup_{n\geq 1}\bigg\|\sum_{k=1}^n \frac{a_kT^{n_k}f}{W_k}\bigg\|_\ch\bigg\|_2\leq CK\|f\|_2.
$$

(ii) Let us denote
\begin{equation}\label{psi1}
\psi_n(z)=\sum_{k=1}^na_k z^{n_k}.
\end{equation}
By the maximal principle and \eqref{T41},
$$
|\psi_n(z)|\leq K G_n, \ \ |z|\leq 1.
$$
Hence, for every contraction $T$ on a Hilbert space, by Theorem A \cite{RiN}
$$
\|\psi_n(T)\|\leq K G_n
$$
where $\psi_n(T)=\sum_{k=1}^na_k T^{n_k}$.

Now, assume that $T$ is a Dunfors-Schwartz operator of $L_1(\m,\ch)$, and put
$$
T_n=\sum_{k=1}^na_k T^{n_k}.
$$
Then, due to above observation $\|T_n\|_2\leq KG_n$. Moreover, we also have  $\|T_n\|_1\leq n\|\{a_k\}\|_{\ell_\infty}$.
Hence, the Riesz-Thorin interpolation Theorem \cite[VII, p. 95]{Zyg} implies that for $1<p<2$
\begin{equation}\label{T44}
\|T_n\|_p\leq \big(n\|\{a_k\}\|_{\ell_\infty}\big)^{\frac{2-p}{p}}\big(K G_n)^{\frac{2(p-1)}{p}}
\end{equation}
Denote
$$
G_{n}^{(p)}=G_n^{\frac{2(p-1)}{p}}n^{\frac{2-p}{p}}
$$
then from \eqref{T44}, for $f\in L_p(\m,\ch)$ we obtain
$$
\bigg\|\frac{1}{G_{n}^{(p)}}\sum_{k=1}^n a_k T^{n_k}f\bigg\|_p\leq K_p
$$
where $K_p=\|\{a_k\}\|_{\ell_\infty}^{\frac{2-p}{p}}K^{\frac{2(p-1)}{p}}$.
Hence, Theorem \ref{T3} implies that the series
$$
\sum_{k=1}^\infty\frac{a_k T^{n_k}f}{\tilde W_k}
$$
converges a.e. for any $\{\tilde W_n\}\in\cw_p(G_{n}^{(p)})$ with  \eqref{T21} for $G_{n}^{(p)}$, and
$$
\sup_{n\geq 1}\bigg\|\sum_{k=1}^n \frac{a_kT^{n_k}f}{\tilde W_k}\bigg\|_\ch\in L_p(\m).
$$
This completes the proof.
\end{proof}

Now, we are going to improve above theorem. To do so, we need some auxiliary facts.

\begin{lem}\label{L5}
Let $\{G_n\}$ be a weight, and $\{n_k\}\subset\bn$ be an increasing sequence, and let  $\{a_n\}$ be a bounded sequence of complex numbers such that
\begin{equation}\label{L51}
\sup_{n\geq 1}\max_{|\l |=1}\bigg|\frac{1}{G_n}\sum_{k=1}^n a_k \l^{n_k}\bigg|=K<\infty.
\end{equation}
Then for any $r\in\br$
\begin{equation}\label{T41}
\sup_{n\geq 1}\max_{|\l |=1}\bigg|\frac{1}{G_{n,r}}\sum_{k=1}^n a_k k^{ir}\l^{n_k}\bigg|\leq |r|K
\end{equation}
where
\begin{equation}\label{L52}
G_{n,r}=\frac{1}{|r|}G_n+\sum_{k=1}^{n-1}\frac{G_k}{k}.
\end{equation}
\end{lem}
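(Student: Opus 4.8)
The plan is to use Abel summation (summation by parts) in the variable $k$ to convert the weighted sum $\sum_{k=1}^n a_k k^{ir}\l^{n_k}$ into a combination of the partial sums $\psi_j(\l)=\sum_{k=1}^j a_k\l^{n_k}$, which by \eqref{L51} satisfy $|\psi_j(\l)|\le K G_j$ uniformly in $|\l|=1$. Writing $b_k=k^{ir}$, one has $b_k-b_{k+1}=k^{ir}-(k+1)^{ir}$, and the elementary estimate $|k^{ir}-(k+1)^{ir}|=|e^{ir\ln k}-e^{ir\ln(k+1)}|\le |r|\,\ln\!\big(1+\tfrac1k\big)\le \tfrac{|r|}{k}$ controls the increments. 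Thus
\begin{eqnarray*}
\bigg|\sum_{k=1}^n a_k k^{ir}\l^{n_k}\bigg|
&=&\bigg|b_n\psi_n(\l)+\sum_{k=1}^{n-1}(b_k-b_{k+1})\psi_k(\l)\bigg|\\[1mm]
&\leq& |\psi_n(\l)|+\sum_{k=1}^{n-1}|b_k-b_{k+1}|\,|\psi_k(\l)|\\[1mm]
&\leq& KG_n+\sum_{k=1}^{n-1}\frac{|r|}{k}\,KG_k
= |r|K\bigg(\frac{1}{|r|}G_n+\sum_{k=1}^{n-1}\frac{G_k}{k}\bigg)=|r|K\,G_{n,r},
\end{eqnarray*}
where in the last display I used $|b_n|=1$. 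Dividing by $G_{n,r}$ and taking the supremum over $n$ and the maximum over $|\l|=1$ gives \eqref{T41}.

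The main point requiring care is the increment bound for $k^{ir}$: one must check $|e^{i\th_1}-e^{i\th_2}|\le|\th_1-\th_2|$ (a standard consequence of $|e^{iu}-1|\le|u|$) with $\th_1=r\ln k$, $\th_2=r\ln(k+1)$, so that $|\th_1-\th_2|=|r|\,\ln\frac{k+1}{k}\le\frac{|r|}{k}$ using $\ln(1+x)\le x$ for $x>0$. Everything else is the routine Abel rearrangement together with the hypothesis \eqref{L51}; the only mild subtlety is that the Abel identity and all the bounds hold pointwise in $\l$ on the unit circle, so the estimate survives passing to $\max_{|\l|=1}$. There is no integrability or measure-theoretic issue here since the statement is purely about scalar sequences. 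I would also remark that $G_{n,r}$ is again a weight (increasing, diverging to $\infty$, with $G_{1,r}=\tfrac1{|r|}G_1\ge 1$ once $|r|\le 1$, and in general one can absorb a constant), which is what makes the lemma usable as an input to Theorem \ref{T3} in the subsequent improvement of Theorem \ref{T4}.
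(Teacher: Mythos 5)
Your proof is correct and follows essentially the same route as the paper: Abel summation against the partial sums $\psi_k(\l)$, the increment estimate $|k^{ir}-(k+1)^{ir}|\leq |r|/k$, and the hypothesis \eqref{L51}. In fact you supply more detail than the paper does on the increment bound (via $|e^{iu}-1|\leq |u|$ and $\ln(1+1/k)\leq 1/k$), which the paper simply asserts.
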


\begin{proof} Denote
Using Abel's summation and \eqref{L51}, we obtain
\begin{eqnarray*}
\bigg|\sum_{k=1}^na_kk^{ir}\l^{n_k}\bigg|&=&|n^{ir}\psi_n(\l)|+\bigg|\sum_{k=1}^{n-1}\big(k^{ir}-(k+1)^{ir}\big)\psi_k(\l)\bigg|\\[2mm]
&\leq &|\psi_n(\l)|+|r|\sum_{k=1}^{n-1}\frac{|\psi_k(\l)|}{k}\\[2mm]
&\leq& KG_n+|r|K\sum_{\ell=1}^{n-1}\frac{G_k}{k}\\[2mm]
&=&|r|KG_{n,r}
\end{eqnarray*}
where $\psi_k(\l)$ is given by \eqref{psi1}.
This completes the proof.
\end{proof}

\begin{lem}\label{L6}
Let $\{G_n\}$ be a weight and $G_{n,r}$ ($r\in\br$) be given by \eqref{L52}.  Then, for any weight $\{W_n\}\in \cw_p(G_{n,r})$ and $\d>0$
$$
\bigg\{\frac{1}{\d}W_n\bigg\}\in \cw_p(G_{n,\d r}).
$$
\end{lem}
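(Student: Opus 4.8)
The plan is to unwind the definitions. Write $V_n = \frac{1}{\delta}W_n$; since multiplying a weight by a positive constant $\frac{1}{\delta}$ still gives an increasing sequence tending to $\infty$ (after possibly the requirement $V_1\ge 1$, which is not essential for the admissibility conditions since only the ratios matter), $\{V_n\}$ is again a weight. I must produce an increasing sequence $\{m_k\}\subset\bn$ witnessing that $\{V_n\}\in\cw_p(G_{n,\delta r})$, i.e. that
\begin{eqnarray*}
&&\sum_{k=1}^\infty\bigg(\frac{G_{m_k,\delta r}}{V_{m_k}}\bigg)^p<\infty,\\[2mm]
&&\sum_{k=1}^\infty\bigg(\frac{m_{k+1}-m_k}{V_{m_k}}\bigg)^p<\infty.
\end{eqnarray*}

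First I would take the very same sequence $\{n_k\}$ that witnesses $\{W_n\}\in\cw_p(G_{n,r})$; so set $m_k=n_k$. The second condition is immediate: $\frac{m_{k+1}-m_k}{V_{m_k}} = \delta\,\frac{n_{k+1}-n_k}{W_{n_k}}$, so the series is $\delta^p$ times a convergent series. For the first condition, the key observation is an elementary comparison between $G_{n,\delta r}$ and $G_{n,r}$: from the defining formula \eqref{L52},
$$
G_{n,\delta r}=\frac{1}{|\delta r|}G_n+\sum_{k=1}^{n-1}\frac{G_k}{k}
\quad\text{and}\quad
G_{n,r}=\frac{1}{|r|}G_n+\sum_{k=1}^{n-1}\frac{G_k}{k},
$$
so $G_{n,\delta r}\le \max\{1,\tfrac{1}{\delta}\}\,G_{n,r}$, because the tail sum $\sum_{k=1}^{n-1}G_k/k$ is common to both and the leading coefficient gets scaled by $1/\delta$. (If $\delta\ge1$ one even has $G_{n,\delta r}\le G_{n,r}$.) Consequently
$$
\bigg(\frac{G_{n_k,\delta r}}{V_{n_k}}\bigg)^p
=\delta^p\bigg(\frac{G_{n_k,\delta r}}{W_{n_k}}\bigg)^p
\le \delta^p\max\{1,\tfrac1\delta\}^p\bigg(\frac{G_{n_k,r}}{W_{n_k}}\bigg)^p,
$$
and the right-hand side is summable by hypothesis. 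This establishes \eqref{W3}–\eqref{W4} for $\{V_n\}$ with respect to $\{G_{n,\delta r}\}$, hence $\{\tfrac1\delta W_n\}\in\cw_p(G_{n,\delta r})$.

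I do not anticipate a genuine obstacle here; the only point requiring a moment's care is the bookkeeping on the constant relating $G_{n,\delta r}$ and $G_{n,r}$ when $\delta<1$ (the case $\delta\ge1$ being trivial), and the trivial remark that scaling a weight by a positive constant preserves the property of being a weight in the sense used here (monotonicity and divergence to $\infty$). Everything else is a one-line substitution $m_k=n_k$ into the two defining series of $\cw_p(\cdot)$.
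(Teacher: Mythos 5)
Your proof is correct and follows essentially the same route as the paper: keep the same witnessing sequence $\{n_k\}$, observe that the gap condition just picks up a factor $\d^p$, and reduce the first condition to a pointwise bound $G_{n,\d r}\le C_\d\,G_{n,r}$ (your constant $\max\{1,1/\d\}$ versus the paper's $1+|\d-1|$, obtained by splitting the common sum $\sum_{k\le n-1}G_k/k$; both work). Your version is marginally cleaner and, unlike the paper, explicitly checks the second defining series, but there is no substantive difference.
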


\begin{proof}  From $\{W_n\}\in \cw_p(G_{n,r})$ we infer the existence of $\{n_k\}\subset\bn$ such that
$$
\sum_{k=1}^\infty\bigg(\frac{G_{n_k,r}}{W_{n_k}}\bigg)^p<\infty, \ \ \ \  \sum_{k=1}^\infty\bigg(\frac{n_{k+1}-n_k}{W_{n_k}}\bigg)^p<\infty
$$

Take any $\d>0$, and to prove the statement, it is enough to show
\begin{equation}\label{L61}
\sum_{k=1}^\infty\bigg(\frac{G_{n_k,\d r}}{\frac{1}{\d}W_{n_k}}\bigg)^p<\infty
\end{equation}

First, we consider
\begin{eqnarray*}
\bigg(\frac{G_{n_k,\d r}}{\frac{1}{\d}W_{n_k}}\bigg)^p&=&\bigg(\frac{\frac{1}{\d |r|}G_{n_k}+\frac{1}{\d}\sum\limits_{k=1}^{n_k-1}\frac{G_k}{k}+\big(1-\frac{1}{\d}\big)\sum\limits_{k=1}^{n_k-1}\frac{G_k}{k}}{\frac{1}{\d}W_{n_k}}\bigg)^p\\[3mm]
&\leq&\bigg(\frac{G_{n_k,r}}{W_{n_k}}+\frac{|1-1/\d|}{1/\d}\frac{\sum\limits_{k=1}^{n_k-1}\frac{G_k}{k}}{W_{n_k}}\bigg)^p\\[3mm]
&\leq & \bigg(\frac{G_{n_k,r}}{W_{n_k}}+|\d-1|\frac{G_{n_k,r}}{W_{n_k}}\bigg)^p\\[2mm]
&=&(1+|\d-1|)^p\bigg(\frac{G_{n_k,r}}{W_{n_k}}\bigg)^p
\end{eqnarray*}
which yields \eqref{L61}. This completes the proof.
\end{proof}

\begin{thm}\label{T7} Let $\{G_n\}$ be a weight, and $\{n_k\}\subset\bn$ be an increasing sequence, and let  $\{a_n\}$ be a bounded sequence of complex numbers such that
\begin{equation}\label{T71}
\sup_{n\geq 1}\max_{|\l |=1}\bigg|\frac{1}{G_n}\sum_{k=1}^n a_k \l^{n_k}\bigg|=K<\infty.
\end{equation}
For any $\{W_n\}\in\cw_p(G_{n,1})$ with
\begin{equation}\label{T72}
\sum_{n=1}^\infty\frac{G_{n,1}}{W_n}\bigg|1-\frac{W_n}{W_{n+1}}\bigg|<\infty,
\end{equation}
 let $\b>0$ such that
\begin{equation}\label{T73}
\sum_{k=1}^\infty\frac{1}{k^\b W_k}<\infty.
\end{equation}
Then for any DS operator $T:L_p(\m,\ch)\to L_p(\m,\ch)$ and $f\in L_p(\m,\ch)$, $1<p<2$, the series
$$
\sum_{k=1}^\infty\frac{a_kT^{n_k}f}{k^{\b t}W_k}
$$
converges a.e., where $t=\frac{2-p}{p}$. Moreover,
\begin{equation}\label{T74}
\bigg\|\sup_{n\geq 1}\bigg\|\sum_{k=1}^n\frac{a_k}{k^{\b t}W_k}T^{n_k}f\bigg\|_\ch\bigg\|_p<\infty.
\end{equation}
\end{thm}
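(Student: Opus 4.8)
The plan is to peel off the factor $k^{-\b t}$ by means of the Poisson--kernel identity
\[
\frac{1}{k^{\b t}}=\int_{\br}P_{\b t}(x)\,k^{ix}\,dx,\qquad P_a(x):=\frac1\pi\,\frac{a}{a^{2}+x^{2}},
\]
valid for every $k\ge1$ since the Fourier transform of $P_a$ is $\xi\mapsto e^{-a|\xi|}$ and $\ln k\ge0$. Then each partial sum can be written as
\[
\sum_{k=1}^{N}\frac{a_kT^{n_k}f}{k^{\b t}W_k}=\int_{\br}P_{\b t}(x)\bigg(\sum_{k=1}^{N}\frac{a_k k^{ix}T^{n_k}f}{W_k}\bigg)dx,
\]
so the statement reduces to an $x$-parametrised family of modulated ergodic Hilbert transforms, each member of which I would handle with Lemma~\ref{L5} and the machinery of Theorem~\ref{T4}(ii), and then reassemble via Fubini's theorem and dominated convergence.

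First, fix $x\in\br$. By Lemma~\ref{L5} the modulated coefficients $\{a_k k^{ix}\}$ satisfy $\sup_{n}\max_{|\l|=1}\big|\frac1{G_{n,x}}\sum_{k=1}^{n}a_k k^{ix}\l^{n_k}\big|\le|x|K$. Hence, repeating the argument of Theorem~\ref{T4}(ii) verbatim --- the maximal-principle bound followed by Theorem~A of \cite{RiN}, giving $\big\|\sum_{k=1}^{n}a_k k^{ix}T^{n_k}\big\|_{L_2(\m,\ch)}\le|x|KG_{n,x}$ for the Hilbert-space contraction induced by the Dunford--Schwartz operator $T$ on $L_2(\m,\ch)$; the $L_1$-contractivity bound $\big\|\sum_{k=1}^{n}a_k k^{ix}T^{n_k}\big\|_{L_1(\m,\ch)}\le n\|\{a_k\}\|_{\ell_\infty}$; and Riesz--Thorin interpolation --- one obtains $\big\|\sum_{k=1}^{n}a_k k^{ix}T^{n_k}\big\|_{p\to p}\le(n\|\{a_k\}\|_{\ell_\infty})^{t}(|x|KG_{n,x})^{1-t}$. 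Since $|x|G_{n,x}=G_n+|x|\sum_{j<n}\frac{G_j}{j}\le(1+|x|)G_{n,1}$, this yields, with the weight $\Gamma_n:=n^{t}G_{n,1}^{1-t}$,
\[
\sup_{n\ge1}\bigg\|\frac1{\Gamma_n}\sum_{k=1}^{n}a_k k^{ix}T^{n_k}f\bigg\|_p\le C_0(1+|x|)^{1-t}\|f\|_p\qquad(f\in L_p(\m,\ch)),
\]
with $C_0$ independent of $x$.

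The next step is to check that $\{W_n\}\in\cw_p(\Gamma_n)$ and that $\Gamma_n$ satisfies \eqref{T21}. I expect this to be the main obstacle, and it is the only place the hypothesis $\sum_k\frac1{k^{\b}W_k}<\infty$ is genuinely used: that condition forces $n^{1-\b}=o(W_n)$ and, along a suitable increasing sequence $\{m_j\}$ (one taken essentially dyadic), $\sum_j\big(\frac{m_j^{1-\b}}{W_{m_j}}\big)^{p}<\infty$ while $\sum_j\big(\frac{m_{j+1}-m_j}{W_{m_j}}\big)^{p}<\infty$ is retained; then, writing $\big(\frac{\Gamma_{m_j}}{W_{m_j}}\big)^{p}=\big(\frac{G_{m_j,1}}{W_{m_j}}\big)^{(1-t)p}\big(\frac{m_j^{1-\b}}{W_{m_j}}\big)^{tp}$ and $\frac{\Gamma_n}{W_n}\Delta_n=\big(\frac{G_{n,1}}{W_n}\Delta_n\big)^{1-t}\big(\frac{n}{W_n}\Delta_n\big)^{t}$ with $\Delta_n:=1-\frac{W_n}{W_{n+1}}$, Young's inequality together with $\{W_n\}\in\cw_p(G_{n,1})$ and \eqref{T72} should close the estimate. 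Granting this, Theorem~\ref{T4}(ii) (equivalently Theorem~\ref{T3} with $b_k=a_k k^{ix}$, reference weight $\Gamma_n$ and weight $W_n$) gives, for every fixed $x$, a.e.\ convergence of $\sum_k\frac{a_k k^{ix}T^{n_k}f}{W_k}$ and the maximal bound $\Phi_x:=\sup_{N\ge1}\big\|\sum_{k=1}^{N}\frac{a_k k^{ix}T^{n_k}f}{W_k}\big\|_{\ch}\in L_p(\m)$ with $\|\Phi_x\|_p\le C_1(1+|x|)^{1-t}\|f\|_p$, where $C_1$ does not depend on $x$ (the constant in Theorem~\ref{T3} depends only on the $\cw_p$-data of $W$ relative to $\Gamma$ and on $C_0$).

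Finally, to reassemble: by Minkowski's integral inequality $\big\|\int_{\br}P_{\b t}(x)\Phi_x\,dx\big\|_p\le C_1\|f\|_p\int_{\br}P_{\b t}(x)(1+|x|)^{1-t}\,dx<\infty$, the integral being finite because $P_{\b t}(x)\sim\frac{\b t}{\pi x^{2}}$ while $(1+|x|)^{1-t}$ grows strictly slower than $|x|$ --- here $t>0$ is used. Thus $\Psi:=\int_{\br}P_{\b t}(x)\Phi_x\,dx\in L_p(\m)$, hence $\Psi<\infty$ a.e. For a.e.\ $\omega$ and every $N$,
\[
\bigg\|\sum_{k=1}^{N}\frac{a_kT^{n_k}f(\omega)}{k^{\b t}W_k}\bigg\|_{\ch}=\bigg\|\int_{\br}P_{\b t}(x)\sum_{k=1}^{N}\frac{a_k k^{ix}T^{n_k}f(\omega)}{W_k}\,dx\bigg\|_{\ch}\le\int_{\br}P_{\b t}(x)\Phi_x(\omega)\,dx=\Psi(\omega),
\]
which gives \eqref{T74}; and since for each $x$ the inner series converges for $\m$-a.e.\ $\omega$, Fubini's theorem shows that for $\m$-a.e.\ $\omega$ it converges for Lebesgue-a.e.\ $x$, whereupon dominated convergence in $x$ (majorant $P_{\b t}(x)\Phi_x(\omega)$) yields convergence of $\int_{\br}P_{\b t}(x)\sum_{k=1}^{N}\frac{a_k k^{ix}T^{n_k}f(\omega)}{W_k}\,dx=\sum_{k=1}^{N}\frac{a_kT^{n_k}f(\omega)}{k^{\b t}W_k}$ as $N\to\infty$, for a.e.\ $\omega$. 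In short, the hard part is the admissibility transfer from $\{G_{n,1}\}$ to $\Gamma_n=n^{t}G_{n,1}^{1-t}$ in the third step; the rest is uniform-in-$x$ bookkeeping built on results already proved above.
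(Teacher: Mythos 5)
Your Poisson--kernel subordination $k^{-\b t}=\int_{\br}P_{\b t}(x)k^{ix}\,dx$ is a genuinely different route from the paper's, which embeds the target operator into the analytic family $\Phi_{n,z}=\sum_{k\le n}\frac{a_kk^{-z\b}}{W_k}T^{n_k}$ and applies \emph{Stein's} interpolation between an $L_2$ maximal bound on the line $\Re z=0$ (via Lemma \ref{L5}, Lemma \ref{L6} and part (i) of Theorem \ref{T4}) and an $L_1$ bound on the line $\Re z=1$ supplied by \eqref{T73}. Unfortunately your argument has a genuine gap at exactly the step you flag as the main obstacle, and I do not think it can be closed. The per-$x$ claim you need --- that for each fixed $x$ the series $\sum_k\frac{a_kk^{ix}T^{n_k}f}{W_k}$, with \emph{no} factor $k^{-\b t}$, converges a.e.\ for every $f\in L_p$ with $\|\Phi_x\|_p\le C(1+|x|)^{1-t}\|f\|_p$ --- is strictly stronger than the theorem itself: at $x=0$ it would give a.e.\ convergence of $\sum_k\frac{a_kT^{n_k}f}{W_k}$ on all of $L_p$, rendering the correction $k^{-\b t}$ in the statement superfluous. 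The obstruction is visible in your own bookkeeping. Riesz--Thorin yields the reference weight $\Gamma_n=n^{t}G_{n,1}^{1-t}$, and
$$
\bigg(\frac{\Gamma_{n}}{W_{n}}\bigg)^{p}=\bigg(\frac{G_{n,1}}{W_{n}}\bigg)^{(1-t)p}\bigg(\frac{n}{W_{n}}\bigg)^{tp},
$$
not $\big(\frac{G_{n,1}}{W_{n}}\big)^{(1-t)p}\big(\frac{n^{1-\b}}{W_{n}}\big)^{tp}$ as you wrote; the two differ by $n^{\b tp}$, i.e.\ by precisely the $p$-th power of the factor $k^{\b t}$ that the theorem inserts in the denominator. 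The hypotheses $\{W_n\}\in\cw_p(G_{n,1})$, \eqref{T72} and \eqref{T73} give no control of $(n/W_n)^{t}$ of the required strength: as $p\downarrow 1$ one has $t\uparrow 1$ and $\Gamma_n\approx n$, so membership in $\cw_p(\Gamma_n)$ would force $W_n$ to dominate $n$ along the admissible subsequence, which fails for every weight of Example \ref{E1}, while \eqref{T73} holds there for all large $\b$ and hence imposes nothing additional.

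The failure is structural, not just a matter of sharpening estimates. Since $|k^{ix}|=1$, the identity $k^{-\b t}=\int P_{\b t}(x)k^{ix}\,dx$ spreads the decay over a probability measure, and your final Minkowski/triangle-inequality step $\|\int P_{\b t}\Phi_x\,dx\|\le\int P_{\b t}\|\Phi_x\|\,dx$ then discards it entirely: a positive superposition of unimodular modulations cannot rescue an estimate that fails for each modulation separately, because the only possible gain would come from cancellation in $x$, which the triangle inequality forfeits. This is exactly what Stein's theorem provides and Riesz--Thorin (applied to a single fixed operator) cannot: in the paper the decay exponent itself interpolates, from $0$ on $\Re z=0$ (where only the $L_2$ theory is available) to $\b$ on $\Re z=1$ (where \eqref{T73} yields the $L_1$ maximal bound \eqref{pT75}), producing the genuine fractional decay $k^{-\b t}$ at the interior point $z=t$. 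The remaining components of your proposal --- the subordination identity, the use of Lemma \ref{L5}, the bound $\|\sum_{k\le n}a_kk^{ix}T^{n_k}\|_{p\to p}\le(n\|\{a_k\}\|_{\ell_\infty})^{t}\big((1+|x|)KG_{n,1}\big)^{1-t}$, and the concluding Fubini/dominated-convergence assembly --- are all sound, but they cannot be completed without the per-$x$ maximal inequality, and that inequality is not a consequence of the stated hypotheses.
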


\begin{proof} Assume that \eqref{T71} holds, then, for DS-operator $T$, and $f\in L_2(\m,\ch)$, taking into account Lemma \ref{L5} according to the proof of (i) Theorem \ref{T4}, we have
\begin{equation}\label{pT71}
\bigg\|\sup_{n\geq 1}\bigg\|\sum_{k=1}^n \frac{a_k k^{ir}T^{n_k}f}{\tilde W_k}\bigg\|_\ch\bigg\|_2\leq |r|K\|f\|_2.
\end{equation}
for any  $\{\tilde W_n\}\in\cw_p(G_{n,r})$ with
\begin{equation*}
\sum_{n=1}^\infty\frac{G_{n,r}}{W_n}\bigg|1-\frac{W_n}{W_{n+1}}\bigg|<\infty.
\end{equation*}

Due to Lemma \ref{L6}, we may replace any $p$-admissible weight $\{\tilde W_n\}\in\cw_p(G_{n,r})$ with $p$-admissible weight $\{W_n\}\in\cw_p(G_{n,1})$ with \eqref{T72}
Hence, for every $\{W_n\}\in\cw_p(G_{n,1})$ the inequality \eqref{pT71} reduces to
\begin{equation}\label{pT72}
\bigg\|\sup_{n\geq 1}\bigg\|\sum_{k=1}^n \frac{a_k k^{ir}T^{n_k}f}{W_k}\bigg\|_\ch\bigg\|_2\leq K\|f\|_2.
\end{equation}

Now, let $\b>0$ such that \eqref{T73} holds, i.e.
$$
\sum_{k=1}^\infty\frac{1}{k^\b W_k}<\infty.
$$

For $z\in\bc$, let us denote
\begin{equation}\label{pT73}
\Phi_{n,z}(T):=\sum_{k=1}^n\frac{a_k k^{-z\b}}{W_k}T^{n_k}.
\end{equation}
Then from \eqref{pT72}
\begin{equation}\label{pT74}
\bigg\|\sup_{n\geq 1}\big\|\Phi_{n,ir}(T)f\big\|_\ch\bigg\|_2\leq K\|f\|_2, \ \ r\in\br.
\end{equation}
For $z=1+ir$, one has
$$
\sup_{n\geq 1}\big\|\Phi_{n,1+ir}(T)f\big\|_\ch\leq \sum_{k=1}^\infty\frac{|a_k|k^{-\b}\|T^{n_k}f\|_\ch}{W_k}
$$
Hence, keeping in mind \eqref{T73}, we obtain
\begin{eqnarray}\label{pT75}
\sup_{n\geq 1}\big\|\Phi_{n,1+ir}(T)f\big\|_\ch \leq \|\mathbf{\{|a_k|\}}\|_\infty\|f\|_1\sum_{k=1}^\infty\frac{1}{k^\b W_k}<\infty
\end{eqnarray}

Now, we will follow ideas of \cite{CL2003} to employ Stein's interpolation theorem.

For a bounded measurable positive integer-valued function $I$ and $z$ with $0\leq \Re(z)\leq 1$, let us define a linear operator
$$
\Phi_{I,z}f(x):=\sum_{k=1}^{I(x)}\frac{a_kk^{-z\b}}{W_k}T^{n_k}f(x)=\sum_{j=1}^{\max I}1_{I=j}(x)\sum_{k=1}^j \frac{a_kk^{-z\b}}{W_k}T^{n_k}f(x), \ \ \  f\in L_p(\m,\ch).
$$

Take any two integrable simple functions $f$ and $g$, then one check that the function
$$
F(z)=\int \langle \Phi_{I,z}f,g\rangle d\m=\sum_{j=1}^{\max I}\sum_{k=1}^j \int \frac{a_kk^{-z\b}}{W_k}\langle T^{n_k}f(x),1_{I=j}(x)g(x)\rangle d\m
$$
is continuous and bounded in the strip $\{z\in\bc: \ 0\leq \Re(z)\leq 1\}$, and analytic in its interior. Moreover, due to \eqref{pT74} and \eqref{pT75} one has
\begin{eqnarray*}
&&\|\Phi_{I,ir}f\|_2\leq \bigg\|\sup_{n\geq 1}\|\Phi_{n,ir}f\|_\ch\bigg\|_2\leq K\|f\|_2\\[2mm]
&&\|\Phi_{I,1+ir}f\|_1\leq \bigg\|\sup_{n\geq 1}\|\Phi_{n,1+ir}f\|_\ch\bigg\|_1\leq \bigg(\|\mathbf{\{|a_k|\}}\|_\infty\sum_{k=1}^\infty\frac{1}{k^\b W_k}\bigg)\|f\|_1
\end{eqnarray*}

For $1<p<2$, let $t=\frac{2}{p}-1$, so one has $\frac{1}{p}=(1-t)\frac{1}{2}+t 1$. The Stein's interpolation Theorem implies the existence of a constant $A_t$ such that for every $f\in L_p(\m,\ch)$
$$
\|\Phi_{I,t}f\|_p\leq A_t\|f\|_p
$$
which is equivalent to
$$
\bigg\|\sum_{k=1}^{I(x)}\frac{a_kk^{-t\b}}{W_k}T^{n_k}f\bigg\|_p\leq A_t\|f\|_p.
$$

For an integer $N\geq 2$, let $I_N(x)=j$ for $j$ the first integer with
$$
\bigg\|\sum_{k=1}^{j}\frac{a_kk^{-t\b}}{W_k}T^{n_k}f(x)\bigg\|_\ch=\max_{1\leq n\leq N}\bigg\|\sum_{k=1}^{n}\frac{a_kk^{-t\b}}{W_k}T^{n_k}f(x)\bigg\|_\ch.
$$

Then for $f\in L_p(\m,\ch)$, we have
\begin{eqnarray*}
\bigg\|\max_{1\leq n\leq N}\bigg\|\sum_{k=1}^{n}\frac{a_kk^{-t\b}}{W_k}T^{n_k}f(x)\bigg\|_\ch\bigg\|_p&=&\bigg\|\sum_{k=1}^{I(x)}\frac{a_kk^{-t\b}}{W_k}T^{n_k}f(x)\bigg\|_p\leq A_t\|f\|_p
\end{eqnarray*}
and letting $N\to \infty$, one concludes that
\begin{eqnarray}\label{pT76}
\bigg\|\sup_{n\geq 1}\bigg\|\sum_{k=1}^{n}\frac{a_k}{k^{t\b}W_k}T^{n_k}f\bigg\|_\ch\bigg\|_p<\infty,
\end{eqnarray}
where $t=\frac{2-p}{p}$.

Due to
$$
\bigg(\frac{G_{n_k}+\sum_{k=1}^{n_k-1}\frac{G_k}{k}}{W_{n_k}}\bigg)\in\ell_p
$$
and
$$
\frac{G_{n_k}}{W_{n_k}}\leq \frac{G_{n_k}+\sum_{k=1}^{n_k-1}\frac{G_k}{k}}{W_{n_k}}
$$
we infer that $(W_n)\in\cw_p(G_n)$. Hence, according to Theorem \ref{T4} (i) one finds the a.e. convergence of the series
$$
\sum_{k=1}^\infty\frac{a_k T^{n_k}f}{W_k}, \ \ \ f\in L_2(\m,\ch).
$$
On the other hand, from
$$
\bigg\|\frac{a_k T^{n_k}f}{k^{t\b}W_n}\bigg\|_2\leq \bigg\|\frac{a_k T^{n_k}f}{W_n}\bigg\|_2
$$
we infer that the series
\begin{eqnarray}\label{pT77}
\sum_{k=1}^\infty\frac{a_k T^{n_k}f}{k^{t\b}W_k}
\end{eqnarray}
converges a.e. for $f\in L_2(\m,\ch)$.

Therefore, the last statement with \eqref{pT76} by applying the Banach Principle, yields the a.e. convergence of the series \eqref{pT77}  for any $f\in L_p(\m,\ch)$. This completes the proof.
\end{proof}

\begin{thm}\label{T8} Let $\{G_n\}$ be a weight, and $\{n_k\}\subset\bn$ be an increasing sequence, and let  $\{a_n\}$ be a bounded sequence of complex numbers such that
\eqref{T71} is satisfied.
For any weight $\{W_n\}$ with \eqref{T72} and
\begin{equation}\label{T81}
\frac{G_{n}}{W_n}\to 0 \ \ \ n\to \infty.
\end{equation}
and every contraction $T:L_2(\m,\ch)\to L_2(\m,\ch)$  the series
$$
\sum_{k=1}^\infty\frac{a_kT^{n_k}f}{W_k}
$$
converges in operator norm. Moreover, this convergence is uniform in all contractions.
\end{thm}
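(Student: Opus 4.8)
The plan is to note that, unlike the a.e.\ results of the previous sections, operator-norm convergence here needs neither $p$-admissibility nor the $(W_n)$-sequence property — indeed $p$ plays no role — and follows from a single Abel-summation identity together with the uniform operator bound already isolated in the proof of Theorem~\ref{T4}(i). So I would first recall that bound: writing $\psi_n(T)=\sum_{k=1}^n a_k T^{n_k}$ as in \eqref{psi1} and applying \eqref{T71}, Theorem~2.1 of \cite{BLRT} and the unitary dilation theorem exactly as in the proof of Theorem~\ref{T4}(i), one obtains $\|\psi_n(T)\|\le K\,G_n$ for every $n\ge1$ and \emph{every} contraction $T$ on the Hilbert space $L_2(\m,\ch)$, with the \emph{same} constant $K$ for all such $T$.

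Next, with $\psi_0(T):=0$, Abel's summation (as in \eqref{T23}) gives, for the partial sums $\Phi_n(T):=\sum_{k=1}^n \frac{a_k T^{n_k}}{W_k}$, the identity
$$
\Phi_n(T)=\frac{\psi_n(T)}{W_n}+\sum_{k=1}^{n-1}\Big(\frac{1}{W_k}-\frac{1}{W_{k+1}}\Big)\psi_k(T).
$$
Since $\{W_n\}$ is increasing, $\frac{1}{W_k}-\frac{1}{W_{k+1}}=\frac{1}{W_k}\big(1-\frac{W_k}{W_{k+1}}\big)\ge0$, and since $G_{n,1}\ge G_n$, hypothesis \eqref{T72} yields $\sum_{k\ge1}\frac{G_k}{W_k}\big(1-\frac{W_k}{W_{k+1}}\big)<\infty$.

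Combining this with the uniform bound of the first step: for $m\le n$,
$$
\Big\|\sum_{k=m}^{n-1}\Big(\frac{1}{W_k}-\frac{1}{W_{k+1}}\Big)\psi_k(T)\Big\|\le K\sum_{k=m}^{\infty}\frac{G_k}{W_k}\Big(1-\frac{W_k}{W_{k+1}}\Big)\longrightarrow0\quad(m\to\infty),
$$
while $\|\psi_n(T)/W_n\|\le K\,G_n/W_n\to0$ by \eqref{T81}; both bounds are independent of $T$. Hence $\{\Phi_n(T)\}_n$ is Cauchy in operator norm, uniformly over all contractions $T$ of $L_2(\m,\ch)$, so it converges in operator norm and the convergence is uniform in $T$ — which is the claim. (In particular $\sum_{k=1}^\infty \frac{a_k T^{n_k}f}{W_k}$ converges in $L_2$-norm, uniformly in $\|f\|_2\le1$ and in $T$.)

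I do not expect any genuine obstacle: the only step with real content is the uniform estimate $\|\psi_n(T)\|\le K\,G_n$, borrowed verbatim from the proof of Theorem~\ref{T4}(i). The points to watch are purely bookkeeping — that every constant is independent of $T$ (so the Cauchy property is uniform), and that \eqref{T72}, via $G_{n,1}\ge G_n$, already dominates the Abel tail, so that the stronger weight classes $\cw_p(\cdot)$ needed for the a.e.\ statements are superfluous here.
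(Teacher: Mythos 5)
Your proposal is correct and follows essentially the same route as the paper: the uniform bound $\|\psi_n(T)\|\le K G_n$ via the unitary dilation theorem, the Abel-summation identity, and the tail estimate from \eqref{T72} combined with \eqref{T81}. The only (welcome) addition is that you make explicit the step $G_{n,1}\ge G_n$, which the paper uses silently when passing from \eqref{T72} to the summability of $\frac{G_k}{W_k}\big(1-\frac{W_k}{W_{k+1}}\big)$.
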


\begin{proof} Given $T$, let us denote
$$
S_n(T)=\sum_{k=1}^n a_k T^{n_k}.
$$
Then from \eqref{T71} by means of the spectral theorem for unitary operators and the unitary dilation theorem, we obtain
 $$
 \|S_n(T)\|\leq K G_n, \ \ \textrm{for all} \ \ T.
 $$
On the other hand, by means of \eqref{T23}, we have
\begin{eqnarray}\label{T82}
\sum_{k=1}^n \frac{a_k T^{n_k}}{W_k}=\frac{S_n(T)}{W_n}+\sum_{k=1}^{n-1}\bigg(\frac{1}{W_k}-\frac{1}{W_{k+1}}\bigg)S_k(T).
\end{eqnarray}
Due to \eqref{T81} one has
$$
\bigg\|\frac{S_n(T)}{W_n}\bigg\|\leq K\frac{G_n}{W_n}\to 0, \ \ \ n\to\infty
$$
which converges uniformly w.r.t. $T$.

Now, let us estimate the second term in \eqref{T82}. Indeed, using \eqref{T72},
\begin{eqnarray*}
\bigg\|\sum_{k=j}^{n-1}\bigg(\frac{1}{W_k}-\frac{1}{W_{k+1}}\bigg)S_k(T)\bigg\|&\leq &\sum_{k=j}^{n-1}\bigg\|\frac{S_k(T)}{W_k}\bigg\|\bigg(1-\frac{W_k}{W_{k+1}}\bigg)\\[3mm]
&\leq &\sum_{k=j}^{n-1}\bigg\|\frac{S_k(T)}{G_k}\bigg\|\frac{G_k}{W_k}\bigg(1-\frac{W_k}{W_{k+1}}\ \bigg)\\[3mm]
&\leq & K\sum_{k=j}^{\infty}\frac{G_k}{W_k}\bigg(1-\frac{W_k}{W_{k+1}}\bigg)\to 0 \ \ \ j\to\infty.
\end{eqnarray*}
This means that the second term in \eqref{T82} is Cauchy in operator norm, uniformly in $T$. Hence, the series
$$
\sum_{k=1}^\infty\frac{a_k T^{n_k}}{W_k}
$$
converges in operator norm.
\end{proof}

\section{Random modulated one-sided ergodic Hilbert transforms}

In this section, we provide applications of the results of the previous sections to obtain random versions of that kinds of results.

\begin{thm}\label{RT1}
Let $\{G_n\}$ be a weight, and $\{n_k\}\subset\bn$ be an increasing sequence. Assume that  $\{f_n\}\subset L_p(Y, \nu)$  be a bounded sequence such that  there exists
a subset $Y^*\subset Y$ with $\nu(Y^*)=1$ when $y\in Y^*$ one has
\begin{equation}\label{RT11}
\sup_{n\geq 1}\max_{|\l |=1}\bigg|\frac{1}{G_n}\sum_{k=1}^n f_k(y) \l^{n_k}\bigg|=K<\infty.
\end{equation}
Then for every contraction  $T:L_2(\m,\ch)\to L_2(\m,\ch)$  and $g\in L_2(\m,\ch)$  the series
\begin{equation*}\label{RT12}
\sum_{k=1}^\infty\frac{f_k(y)T^{n_k}g}{W_k}
\end{equation*}
converges $\m$-a.e., for any $\{W_n\}\in\cw_2(G_n)$ with  \eqref{T21}. Moreover,
$$
\sup_{n\geq 1}\bigg\|\sum_{k=1}^n \frac{f_k(y)T^{n_k}g}{W_k}\bigg\|_\ch\in L_2(\m).
$$
\end{thm}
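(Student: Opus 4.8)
The plan is to deduce Theorem \ref{RT1} from the deterministic statement Theorem \ref{T4}(i) by freezing the randomness parameter $y$. First I would fix $y\in Y^*$ and put $a_k:=f_k(y)$ for $k\in\bn$. Since the sequence $\{f_n\}\subset L_p(Y,\nu)$ is bounded, each $f_k$ satisfies $|f_k|\le\|f_k\|_\infty$ off a $\nu$-null set; removing the countable union of these null sets from $Y^*$ (and keeping the notation $Y^*$, still of full $\nu$-measure) we may assume $\sup_k|f_k(y)|<\infty$ for every $y\in Y^*$. Hence, for each such $y$, $\{a_k\}$ is a bounded sequence of complex numbers, and hypothesis \eqref{RT11} says precisely that $\{a_k\}$ satisfies condition \eqref{1T41} with the same constant $K$.

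Next I would apply Theorem \ref{T4}(i) verbatim to this $\{a_k\}$. It yields, for every contraction $T:L_2(\m,\ch)\to L_2(\m,\ch)$, every $g\in L_2(\m,\ch)$, and every $\{W_n\}\in\cw_2(G_n)$ obeying \eqref{T21}, the $\m$-a.e. convergence of the series $\sum_{k=1}^\infty\frac{a_kT^{n_k}g}{W_k}$ together with $\sup_{n\geq1}\bigl\|\sum_{k=1}^n\frac{a_kT^{n_k}g}{W_k}\bigr\|_\ch\in L_2(\m)$, indeed with the quantitative estimate $\bigl\|\sup_{n\geq1}\|\sum_{k=1}^n\frac{a_kT^{n_k}g}{W_k}\|_\ch\bigr\|_2\le CK\|g\|_2$. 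Rewriting $a_k=f_k(y)$ gives exactly the two conclusions of Theorem \ref{RT1} for the fixed $y\in Y^*$, and since this holds for all $y$ in a set of full $\nu$-measure, the proof is complete. If one wants the joint version, Fubini's theorem then upgrades this to $(\nu\otimes\m)$-a.e. convergence, using that the partial sums are jointly measurable in $(y,\omega)$.

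The only delicate point — and the closest thing to an obstacle — is the passage from the $L_p(Y,\nu)$-boundedness hypothesis to the pointwise bound $\sup_k|f_k(y)|<\infty$ needed to feed Theorem \ref{T4}(i), which is why ``bounded'' must be read as $\sup_k\|f_k\|_\infty<\infty$ (note that \eqref{RT11} alone only gives $|f_k(y)|\le 2KG_k$, which is useless since $G_k\to\infty$). I would also remark that no uniformity of $K$ in $y$ is required: the argument is run separately for each fixed $y\in Y^*$, so it suffices that $K=K(y)<\infty$. Once these observations are in place the proof is a routine specialization, as all the analytic input — the unitary dilation / spectral estimate producing \eqref{CT21c}, Corollary \ref{CT2}, and the Banach-principle maximal inequality — has already been carried out inside the proof of Theorem \ref{T4}.
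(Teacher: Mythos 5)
Your proposal is correct and is essentially the paper's own argument: the paper simply remarks that the proof ``uses the same argument of the proof of Theorem \ref{T4}'', i.e.\ one freezes $y\in Y^*$, sets $a_k=f_k(y)$, and invokes Theorem \ref{T4}(i). Your additional care about reading ``bounded'' as $\sup_k\|f_k\|_\infty<\infty$ (so that $\{f_k(y)\}$ is pointwise bounded off a null set) and your observation that $K$ may depend on $y$ are sensible clarifications of points the paper leaves implicit.
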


The proof  uses the same argument of the proof of Theorem \ref{T4}.

\begin{rem}
 We notice that in \cite{W} for a symmetric independent complex valued random variables $\{f_n\}$ the condition \eqref{RT11}
 has been established for certain weights depending on
 $\{f_n\}$. A particular case of this type of series has been
investigated in \cite{BW} for $L_2$-contractions.
\end{rem}

Let us provide some sufficient condition for the fulfillment of condition \eqref{RT11}. To do it, we will follow \cite{CL2003} and \cite{MPi}.

\begin{thm}\label{1RT}
Let $\{G_n\}$ be a weight, and let $(Y,\nu)$ be a probability space. Assume that  $\{f_n\}\subset L_2(Y,\nu)$  be an independent with $\int f_n=0$ and $\sup\limits_n\|f_n\|_2<\infty$. Suppose
that $\{n_k\}\subset\bn$ is strictly increasing sequence such that, for every $0<\a<1$ one has
\begin{equation}\label{1RT1}
\g:=\sum_{k=1}^\infty\frac{n_k^\a}{G^2_k}<\infty.
\end{equation}
Then the series
\begin{equation}\label{1RT2}
\sum_{k=1}^\infty\frac{f_k(y)\l^{n_k}}{G_k}
\end{equation}
converges $\n$-a.e uniformly in $\l$. Moreover, we have
\begin{equation}\label{1RT21}
\sup_{n\geq 1}\max_{|\l|=1}\bigg|\sum_{k=1}^n \frac{f_k(y)\l^{n_k}}{G_k}\bigg|\in L_2(\n).
\end{equation}
\end{thm}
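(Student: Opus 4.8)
The plan is to reduce the problem to a uniform bound on a random trigonometric-type series and then apply a chaining / Menchoff--Rademacher type argument. First I would fix $\l$ on the unit circle and consider the partial sums $S_n(y,\l)=\sum_{k=1}^n \frac{f_k(y)\l^{n_k}}{G_k}$. Since the $\{f_k\}$ are independent, centered, and satisfy $\sup_k\|f_k\|_2<\infty$, and since $\sum_k \frac{1}{G_k^2}\leq \sum_k \frac{n_k^\a}{G_k^2}=\g<\infty$ (using $n_k\geq 1$), the Khinchin--Kolmogorov three-series theorem already gives, for each fixed $\l$, the $\n$-a.e. convergence of $S_n(y,\l)$ together with an $L_2$ bound $\sup_n\|S_n(\cdot,\l)\|_2\leq C\sqrt{\g}$. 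The content of the theorem is upgrading this to convergence \emph{uniform in} $\l$ and to the maximal bound \eqref{1RT21}.

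To get uniformity in $\l$, the key step is to control the modulus of continuity of $\l\mapsto S_n(y,\l)$ on the circle. For $|\l|=|\l'|=1$ one has $|\l^{n_k}-(\l')^{n_k}|\leq n_k|\l-\l'|$, so
\begin{eqnarray*}
\bigg\|\sup_{n}\big|S_n(\cdot,\l)-S_n(\cdot,\l')\big|\bigg\|_2 &\leq& |\l-\l'|\cdot\bigg\|\sup_n\sum_{k=1}^n\frac{|f_k|\,n_k}{G_k}\bigg\|_2.
\end{eqnarray*}
This naive estimate is too lossy, so instead I would split the sum dyadically and interpolate: for indices $k$ with $n_k$ large, use the crude $n_k$-factor on a short block; for $k$ with $n_k$ moderate, use the $L_2$-orthogonality of the centered independent terms. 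More precisely, following the method of \cite{CL2003} and \cite{MPi}, one writes the circle as a union of $N$ arcs of length $\sim 1/N$, estimates the oscillation of $S_n$ across each arc by a term involving $\sum_k n_k^\a/G_k^2$ raised to a suitable power (this is exactly where hypothesis \eqref{1RT1} is designed to enter, with the free parameter $\a\in(0,1)$ absorbing the loss), and sums a convergent series in $N$. A standard Menchoff--Rademacher / square-function argument over the resulting net then yields
\begin{eqnarray*}
\bigg\|\sup_{n\geq 1}\max_{|\l|=1}\big|S_n(\cdot,\l)\big|\bigg\|_2 &\leq& C\,\g^{1/2}<\infty,
\end{eqnarray*}
which is \eqref{1RT21}. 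Having the maximal function in $L_2(\n)$, together with $\n$-a.e. convergence for each fixed $\l$ (a dense, in fact full, set of $\l$'s suffices once the oscillation is controlled), one concludes by a routine Cauchy-in-$\sup_\l$ argument that $S_n(y,\l)$ converges $\n$-a.e. uniformly in $\l$.

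The main obstacle is the second step: extracting from \eqref{1RT1}, for the single well-chosen exponent $\a$, a bound on the oscillation of the partial sums over the unit circle that is summable over a dyadic family of nets. The crude Lipschitz bound in $\l$ costs a factor $n_k$, while orthogonality only controls $\sum 1/G_k^2$; the exponent $\a<1$ is precisely the interpolation parameter that lets one trade part of the $n_k$ loss against the decay of $1/G_k^2$, and making this trade quantitatively — with constants uniform in $n$ and in the net — is the technical heart of the proof. Once that estimate is in hand, the independence/centering is used only through the elementary $L_2$-orthogonality $\mathbb{E}\big|\sum_{k\in B} c_k f_k\big|^2=\sum_{k\in B}|c_k|^2\,\mathbb{E}|f_k|^2$, and everything else is bookkeeping.
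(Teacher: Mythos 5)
Your outline identifies the right strategy: for fixed $\l$ the convergence is elementary from independence, centering and $\sum_k G_k^{-2}\le\g<\infty$, and the whole content of the theorem is the uniformity in $\l$, which must come from trading the Lipschitz loss $|\l^{n_k}-(\l')^{n_k}|\le n_k|\l-\l'|$ against the decay of $G_k^{-2}$ via the exponent $\a$. But the step you yourself call ``the technical heart'' --- a quantitative oscillation estimate over a dyadic family of nets on the circle, summable in the net parameter and uniform in $n$ --- is precisely the step you do not carry out. As written, the proposal therefore has a genuine gap exactly where the theorem lives; nothing in your text actually produces the summable-in-$N$ bound you appeal to.

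The paper closes this gap not by redoing the chaining by hand but by invoking Corollary~1.1.2 of \cite{MPi} for random Fourier series on the circle, with frequency set $A=\{n_k\}$, coefficients $a_{n_k}=1/G_k$ (and $a_j=0$ off $A$), and independent centered variables $\xi_{n_k}=f_k$. The hypothesis of that result is the finiteness of the entropy-type integral
$$
I(\s)=\int_0^{2\pi}\frac{\bar\s(s)\,ds}{s\big(\log\tfrac{8\pi}{s}\big)^{1/2}},\qquad
\s(t)=2\bigg(\sum_{k\ge 1}\frac{\sin^2\frac{n_k t}{2}}{G_k^2}\bigg)^{1/2},
$$
where $\bar\s$ is the rearrangement of $\s$. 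The interpolation you describe then becomes a one-line computation: for $0<\a<1$ one has $\sin^2 u\le|\sin u|^{\a}\le|u|^{\a}$, so \eqref{1RT1} gives $\s(t)\le 2^{1-\a/2}|t|^{\a/2}\sqrt{\g}$, hence $\bar\s(s)\le C\,s^{\a/2}$ and $I(\s)<\infty$ since $\int_0^{2\pi}s^{\a/2-1}\big(\log\tfrac{8\pi}{s}\big)^{-1/2}ds$ converges. The same Marcus--Pisier machinery (their inequality (1.15)) delivers the maximal bound \eqref{1RT21} directly, so no separate Menchoff--Rademacher argument over the net is needed. To make your proof complete you should either cite this criterion and verify $I(\s)<\infty$ as above, or genuinely execute the chaining estimate; merely asserting that the parameter $\a$ ``absorbs the loss'' does not establish the convergence of the sum over the nets that uniform convergence requires.
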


\begin{proof} To establish this theorem, we are going to use  \cite[Corollary 1.1.2,  p.10]{MPi} with group $G$ the unit circle, the set $A=\{n_k\}$, and the independent random variables
$\xi_{n_k}=f_k$. The sequence $\{a_n\}$ is defined as follows: $a_{n_k}=1/G_k$, and $a_j=0$, if $j\notin A$.
In what follows, we will identify the unit circle with $[0,2\pi]$ with addition modulo $2\pi$.

According to the mentioned result we need to show that
$$
I(\s):=\int_0^{2\pi}\frac{\bar\s(s)ds}{s(\log \frac{8\pi}{s})^{1/2}}
$$
is finite, where $\bar\s$ is the rearrangement of $\s$, which is defined as follows (see \cite{MPi,CL2003} for details)
 \begin{eqnarray}\label{1RT3}
\s(t):=\bigg(\sum_{j\in A}|a_j|^2|1-e^{ijt}|^2\bigg)^{1/2}=2\bigg(\sum_{k=1}^\infty \frac{\sin^2\frac{n_k t}{2}}{G^2_k}\bigg)^{1/2}.
\end{eqnarray}
Now using $|\sin t|\leq 1$, $\sin^2 t\leq |\sin t|^\a\leq |t|^\a$ and \eqref{1RT1}, from \eqref{1RT3} we obtain
\begin{eqnarray}\label{1RT4}
\s(t)&\leq &2\bigg(\sum_{k=1}^\infty \frac{n_k^\a |t|^\a}{2^\a G^2_k}\bigg)^{1/2}\nonumber\\[2mm]
&=&2^{1-\a/2}|t|^{\a/2}\bigg(\sum_{k=1}^\infty \frac{n_k^\a}{G^2_k}\bigg)^{1/2}\nonumber\\[2mm]
&=&2^{1-\a/2}|t|^{\a/2}\sqrt{\g}
\end{eqnarray}
Then for the distribution of $\s$ by means of \eqref{1RT4}
\begin{eqnarray*}
m_\s(u):=\n\{t\in[0,2\pi]: \ \s(t)<u\}\geq 2^{2/\a-1}\frac{|u|^{2/\a}}{\sqrt{\g}} .
\end{eqnarray*}
Hence, the last inequality yields, for the rearrangement of $\s$
\begin{eqnarray*}
\bar\s(s):=\sup\{ t>0 : \ m_\s(t)<s\}\leq 2^{1-\a/2}|s|^{\a/2}\sqrt{\g} .
\end{eqnarray*}
Now, let us estimate
$$
I(\s)=\int_0^{2\pi}\frac{\bar\s(s)ds}{s(\log \frac{8\pi}{s})^{1/2}}\leq 2^{1-\a/2}\sqrt{\g} \int_0^{2\pi}\frac{ds}{s^{1-\a/2}(\log \frac{8\pi}{s})^{1/2}}<\infty
$$
Hence, by \cite[Corollary 1.2, p. 10]{MPi} the series
$$
\sum_{k=1}^\infty\frac{f_k(\w)\l^{n_k}}{G_k}
$$
converges a.e. uniformly in $\l$.  Moreover, the inequality (1.15) \cite{MPi} yields \eqref{1RT21}.
\end{proof}

\begin{cor}\label{C1RT}
Let the conditions of Theorem \ref{1RT} are satisfied. Then there is a subset $Y^*\subset Y$ with $\n(Y^*)=1$ such that for every $y\in Y^*$ one has
\begin{equation*}
\sup_{n\geq 1}\max_{|\l|=1}\bigg|\frac{1}{G_n}\sum_{k=1}^n f_k(y)\l^{n_k}\bigg|<\infty.
\end{equation*}
\end{cor}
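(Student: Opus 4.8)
The plan is to deduce Corollary \ref{C1RT} directly from Theorem \ref{1RT} by observing that the statement of the corollary is simply a weaker reformulation of the conclusion \eqref{1RT21}, once we unwind the normalization. First I would fix a realization $y \in Y^*$ for which \eqref{1RT21} holds, i.e. $M(y) := \sup_{n\geq 1}\max_{|\l|=1}\big|\sum_{k=1}^n f_k(y)\l^{n_k}/G_k\big| < \infty$; by Theorem \ref{1RT} the set $Y^*$ of such $y$ has full measure $\nu(Y^*)=1$ (indeed \eqref{1RT21} asserts this supremum is in $L_2(\nu)$, hence finite $\nu$-a.e.).

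The key step is a summation-by-parts (Abel) estimate turning the un-normalized partial sums $\sum_{k=1}^n f_k(y)\l^{n_k}$ into partial sums of $f_k(y)\l^{n_k}/G_k$. Writing $\phi_n(\l) := \sum_{k=1}^n \frac{f_k(y)\l^{n_k}}{G_k}$, one has $f_k(y)\l^{n_k} = G_k\big(\phi_k(\l) - \phi_{k-1}(\l)\big)$, so Abel's summation gives
\begin{eqnarray*}
\sum_{k=1}^n f_k(y)\l^{n_k} = G_n\phi_n(\l) - \sum_{k=1}^{n-1}(G_{k+1}-G_k)\phi_k(\l).
\end{eqnarray*}
Hence, dividing by $G_n$ and using that $\{G_n\}$ is increasing,
\begin{eqnarray*}
\bigg|\frac{1}{G_n}\sum_{k=1}^n f_k(y)\l^{n_k}\bigg| \leq |\phi_n(\l)| + \frac{1}{G_n}\sum_{k=1}^{n-1}(G_{k+1}-G_k)|\phi_k(\l)| \leq M(y) + \frac{M(y)}{G_n}\sum_{k=1}^{n-1}(G_{k+1}-G_k).
\end{eqnarray*}
The telescoping sum $\sum_{k=1}^{n-1}(G_{k+1}-G_k) = G_n - G_1 \leq G_n$, so the right-hand side is bounded by $2M(y)$, uniformly in $n$ and in $\l$ with $|\l|=1$. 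Taking the supremum over $n$ and the maximum over $|\l|=1$ yields $\sup_{n\geq 1}\max_{|\l|=1}\big|\frac{1}{G_n}\sum_{k=1}^n f_k(y)\l^{n_k}\big| \leq 2M(y) < \infty$, which is exactly the assertion for this $y$.

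There is essentially no obstacle here: the only point requiring a touch of care is the a.e.\ statement, namely that \eqref{1RT21} (membership in $L_2(\nu)$) indeed forces finiteness on a full-measure set, and that this set can be taken as the required $Y^*$; this is immediate since an $L_2$ function is finite almost everywhere. One could alternatively invoke the a.e.\ uniform convergence of the series \eqref{1RT2} proved in Theorem \ref{1RT}, which already implies that the partial sums $\phi_n(\l)$ are uniformly bounded in $n$ and $\l$ for $\nu$-a.e.\ $y$, and then run the same Abel estimate. Either route completes the proof in a few lines.
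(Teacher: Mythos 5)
Your proof is correct and follows essentially the same route as the paper: the paper passes from the a.e.\ uniform boundedness of the partial sums $\sum_{k=1}^n f_k(y)\l^{n_k}/G_k$ to the normalized sums by citing Kronecker's Lemma, and your Abel-summation computation is precisely the standard proof of that lemma, made explicit and uniform in $\l$ (yielding the clean bound $2M(y)$). No gaps.
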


\begin{proof} By Theorem \ref{1RT} there is a subset $Y^*\subset Y$ with $\n(Y^*)=1$ such that for every $y\in Y^*$ the series
$\sum_{k=1}^\infty\frac{f_k(y)\l^{n_k}}{G_k}$ converges uniformly in $\l$. This implies
$$
\sup_{n\geq 1}\max_{|\l|=1}\bigg|\sum_{k=1}^n \frac{f_k(y)\l^{n_k}}{G_k}\bigg|<\infty.
$$
Then by the Kronecker's Lemma, we immediately obtain
$$
\sup_{n\geq 1}\max_{|\l|=1}\bigg|\frac{1}{G_n}\sum_{k=1}^n f_k(y)\l^{n_k}\bigg|<\infty
$$
which is the required assertion.
\end{proof}

We notice that Corollary \ref{C1RT} gives a sufficient condition for the validity of \eqref{RT11}.\\

Now, let us get another application of Theorem \ref{T4}.

\begin{thm}\label{RT2}
Let $\{G_n\}$ be a weight and $\{n_k\}\subset\bn$ be an increasing sequence. Assume that  $\{a_n\}$ is a bounded sequence such that
\begin{equation}\label{RT21}
\sup_{n\geq 1}\max_{|\l |=1}\bigg|\frac{1}{G_n}\sum_{k=1}^n a_k\l^{n_k}\bigg|=K<\infty.
\end{equation}
Let $\a: \Omega\to\Omega$ be a measure-preserving transformation, and $\{T_\w\}$ be a family of measurable
linear contractions of $\ch$, i.e. $\|T_\w g\|_\ch\leq \|g\|_\ch$ for all $g\in \ch$. Then for every $h\in L_2(\m)$ and $g\in \ch$
the series
\begin{equation}\label{RT22}
\sum_{k=1}^\infty\frac{a_k h(\a^{n_k}(\w))T_\w T_{\a(w)}\cdots T_{\a^{n_k-1}(\w)}g}{W_k}
\end{equation}
converges $\m$-a.e. for any $\{W_n\}\in\cw_2(G_n)$ with  \eqref{T21}.
Moreover, one has
$$
\sup_{n\geq 1}\bigg\|\sum_{k=1}^n\frac{a_k h(\a^{n_k}(\w))T_\w T_{\a(w)}\cdots T_{\a^{n_k-1}(\w)}g}{W_k}\bigg\|_\ch\in L_2(\m)
$$
\end{thm}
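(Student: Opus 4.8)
The plan is to realize the random series as an ordinary modulated one-sided ergodic Hilbert transform of a \emph{single} contraction acting on the Bochner space $L_2(\m,\ch)$, and then to quote Theorem \ref{T4}(i). First I would introduce the skew-product operator
$$
(TF)(\w)=T_\w F(\a(\w)), \qquad F\in L_2(\m,\ch).
$$
The assumed measurability of the field $\{T_\w\}$ guarantees that $TF$ is again strongly measurable, and since each $T_\w$ is a contraction of $\ch$ while $\a$ is measure-preserving,
$$
\|TF\|_2^2=\int_\O\|T_\w F(\a(\w))\|_\ch^2\,d\m\leq\int_\O\|F(\a(\w))\|_\ch^2\,d\m=\|F\|_2^2,
$$
so $T$ is a linear contraction on $L_2(\m,\ch)$.

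Next I would iterate: an elementary induction on $n$ gives
$$
(T^nF)(\w)=T_\w T_{\a(\w)}\cdots T_{\a^{n-1}(\w)}F(\a^n(\w)).
$$
Taking $F(\w)=h(\w)g$, which lies in $L_2(\m,\ch)$ with $\|F\|_2=\|h\|_2\|g\|_\ch$, the previous display specializes to
$$
(T^{n_k}F)(\w)=h(\a^{n_k}(\w))\,T_\w T_{\a(\w)}\cdots T_{\a^{n_k-1}(\w)}g,
$$
so that the partial sums of $\sum_{k}\frac{a_k T^{n_k}F}{W_k}$ are, pointwise in $\w$, exactly the partial sums of the series \eqref{RT22}.

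Finally, hypothesis \eqref{RT21} is literally condition \eqref{1T41} of Theorem \ref{T4} for the sequence $\{a_k\}$ and the weight $\{G_n\}$. Applying Theorem \ref{T4}(i) to the contraction $T$ on $L_2(\m,\ch)$ and to the vector $F\in L_2(\m,\ch)$, we get, for every $\{W_n\}\in\cw_2(G_n)$ satisfying \eqref{T21}, that $\sum_{k}\frac{a_k T^{n_k}F}{W_k}$ converges $\m$-a.e.\ and that $\sup_{n\geq 1}\big\|\sum_{k=1}^n\frac{a_k T^{n_k}F}{W_k}\big\|_\ch\in L_2(\m)$; translating back through the identification above yields both claims of the theorem. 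The only step that is not purely formal is checking that the skew-product $T$ is well defined as a bounded operator on $L_2(\m,\ch)$ — that $\w\mapsto T_\w F(\a(\w))$ is strongly measurable for each $F$ — which is where the hypothesis that $\{T_\w\}$ is a \emph{measurable} family is used; the induction and the appeal to Theorem \ref{T4} are then routine.
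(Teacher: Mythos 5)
Your proposal is correct and follows essentially the same route as the paper: both construct the skew-product contraction $(\mathcal{T}F)(\w)=T_\w F(\a(\w))$ on $L_2(\m,\ch)$, verify it is a contraction using that $\a$ is measure-preserving, specialize to $F(\w)=h(\w)g$ so that $\mathcal{T}^{n}F$ produces exactly the terms of the series, and then invoke Theorem \ref{T4}(i) under hypothesis \eqref{RT21}. Your explicit remark on where strong measurability of $\w\mapsto T_\w F(\a(\w))$ is needed is a small but welcome addition that the paper leaves implicit.
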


\begin{proof}
To prove, we define a mapping $\ct: L_2(\m,\ch)\to L_2(\m,\ch)$ as follows
\begin{equation}\label{RT23}
\ct(f)(\w)=T_{\w}f(\a(w)), \ \ f=f(\w), w\in \Omega.
\end{equation}

One can see that
$$
\|\ct(f)\|_2^2=\int_\O\|T_\w f(\a(\w))\|_\ch^2d\m\leq \int_\O\|f(\a(\w))\|_\ch^2d\m= \int_\O\|f(\w)\|_\ch^2d\m=\|f\|_2^2
$$
which implies that $\ct$ is a contraction of $L_2(\m,\ch)$.

Hence, the condition \eqref{RT21} with Theorem \ref{T4} implies that
 the series
\begin{equation}\label{RT24}
\sum_{k=1}^\infty\frac{a_k\ct^{n_k}f}{W_k}
\end{equation}
converges $\m$-a.e., for any $\{W_n\}\in\cw_2(G_n)$ with  \eqref{T21}. Moreover,
$$
\sup_{n\geq 1}\bigg\|\sum_{k=1}^n \frac{a_k\ct^{n_k}f}{W_k}\bigg\|_\ch\in L_2(\m).
$$

Now, let us choose $f$ as follows:
 $$
 f(\w)=h(\w)g
 $$
 where $h\in L_2(\O,\m)$, $g\in \ch$. Then from \eqref{RT23}
 $$
 \ct(f)(\w)=f(\a(w))T_\w(g)
 $$
 which yields
 \begin{equation*}\label{RT25}
 \ct^n(f)(\w)=h(\a^n(\w))T_\w T_{\a(w)}\cdots T_{\a^{n-1}(\w)}g.
 \end{equation*}
 Consequently, from \eqref{RT24}
$$
\sum_{k=1}^\infty\frac{a_k h(\a^{n_k}(\w))T_\w T_{\a(w)}\cdots T_{\a^{n_k-1}(\w)}g}{W_k}
$$
converges $\m$-a.e.
\end{proof}

\begin{rem}
We stress that such kind of result is not known in the classical setting.
Moreover, similar kind of result can be proved  for DS operators for $L_p$-spaces as well.
 \end{rem}

If we consider a particular case, i.e. if $h$ is constant, then the last theorem yields the a.e. convergence of
\begin{equation*}\label{5t5}
\sum_{k=1}^\infty\frac{a_k T_\w T_{\a(w)}\cdots T_{\a^{n_k-1}(\w)}g}{W_k}.
\end{equation*}

Moreover, we have an other corollary of the last theorem.

\begin{cor}\label{CRT2} Assume that the conditions of Theorem \ref{RT2} are satisfied.
Let $\a: \Omega\to\Omega$ be a measure-preserving transformation, and $T:\ch\to \ch$ be a linear contractions of  $\ch$.
Then for every $h\in L_2(\m)$ and $g\in \ch$
the series
\begin{equation}\label{CRT21}
\sum_{k=1}^\infty\frac{a_k h(\a^{n_k}(\w))T^{n_k}g}{W_k}
\end{equation}
converges $\m$-a.e. for any $\{W_n\}\in\cw_2(G_n)$ with  \eqref{T21}.
Moreover,
$$
\sup_{n\geq 1}\bigg\|\sum_{k=1}^n\frac{a_k h(\a^{n_k}(\w))T^{n_k}g}{W_k}\bigg\|\in L_2(\m)
$$
\end{cor}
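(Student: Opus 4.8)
The plan is to derive Corollary~\ref{CRT2} directly from Theorem~\ref{RT2} by specializing the family of contractions $\{T_\w\}$. Indeed, given the single linear contraction $T:\ch\to\ch$, I would set $T_\w:=T$ for every $\w\in\Omega$. This is trivially a measurable family of contractions of $\ch$, so all hypotheses of Theorem~\ref{RT2} are met once we assume \eqref{RT21} and fix $\{W_n\}\in\cw_2(G_n)$ satisfying \eqref{T21}.

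With this choice, the product $T_\w T_{\a(w)}\cdots T_{\a^{n_k-1}(\w)}$ collapses: each of the $n_k$ factors equals $T$, so the product is simply $T^{n_k}$, independently of $\w$. Substituting this identity into the conclusion of Theorem~\ref{RT2} immediately yields that
\[
\sum_{k=1}^\infty\frac{a_k h(\a^{n_k}(\w))T^{n_k}g}{W_k}
\]
converges $\m$-a.e., and that
\[
\sup_{n\geq 1}\bigg\|\sum_{k=1}^n\frac{a_k h(\a^{n_k}(\w))T^{n_k}g}{W_k}\bigg\|_\ch\in L_2(\m),
\]
which is exactly the assertion of the corollary.

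There is essentially no obstacle here: the only point worth a remark is verifying that the constant family $T_\w\equiv T$ genuinely satisfies the "measurable linear contractions" requirement of Theorem~\ref{RT2} — which it does, since a constant map is measurable and $T$ is assumed contractive on $\ch$. One could also note, as a sanity check, that the auxiliary operator $\ct$ built in the proof of Theorem~\ref{RT2} via $\ct(f)(\w)=T f(\a(\w))$ remains a contraction of $L_2(\m,\ch)$ (the estimate $\|\ct f\|_2\le\|f\|_2$ uses only $\|Tf(\a(\w))\|_\ch\le\|f(\a(\w))\|_\ch$ together with the measure-preserving property of $\a$), and that $\ct^n(f)(\w)=h(\a^n(\w))\,T^n g$ when $f(\w)=h(\w)g$. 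Thus the corollary is an immediate specialization, and I would present it in one short paragraph.
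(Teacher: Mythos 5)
Your proposal is correct and coincides exactly with the paper's own argument: the paper likewise obtains the corollary by taking the family $\{T_\w\}$ in Theorem~\ref{RT2} to be constant, $T_\w\equiv T$, so that the product $T_\w T_{\a(w)}\cdots T_{\a^{n_k-1}(\w)}$ collapses to $T^{n_k}$ and \eqref{RT22} becomes \eqref{CRT21}. Your additional sanity check on the operator $\ct$ is consistent with the construction already carried out in the proof of Theorem~\ref{RT2}.
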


The proof immediately follows from Theorem \ref{RT2} if we suppose that the family $\{T_\w\}$ does not depend on $\w$, and then from \eqref{RT22} one gets the
desired convergence.

The last corollary is a combination of ergodic series and the one-sided ergodic Hilbert transforms.  This is an another kind of a.e. convergence of a random one-sided ergodic Hilbert transform of $T$ acting on $X$. Using the methods of \cite{Fan17} one can get some applications of Corollary \ref{CRT2} to hyperbolic dynamical systems. \\

 By $\bt$ we denote the unit circle in $\bc$. Then, we have following result.

 \begin{cor}\label{CRT3}  Assume that the conditions of Theorem \ref{RT2} are satisfied.
Let  $T:\ch\to \ch$ be a linear contractions of  $\ch$.
Then for every $\l \in\bt$ and $g\in \ch$
the series
\begin{equation}\label{CRT31}
\sum_{k=1}^\infty\frac{a_k \l^{n_k}T^{n_k}g}{W_k}.
\end{equation}
converges in norm of $X$, for any $\{W_n\}\in\cw_2(G_n)$ with  \eqref{T21}.
\end{cor}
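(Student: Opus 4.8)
The plan is to deduce Corollary \ref{CRT3} from Theorem \ref{RT2} (via Corollary \ref{CRT2}) by reinterpreting the scalar factor $\l^{n_k}$ as a measure-preserving rotation. Concretely, I would take $\Omega=\bt$ equipped with normalized Haar measure $\m$ and let $\a:\bt\to\bt$ be the rotation $\a(\w)=\l\w$, which is measure-preserving. Then $\a^{n_k}(\w)=\l^{n_k}\w$, and one chooses $h(\w)=\w$, so that $h(\a^{n_k}(\w))=\l^{n_k}\w$. With this choice, and with the family $\{T_\w\}$ taken independent of $\w$ (so that Corollary \ref{CRT2} applies), the series in \eqref{CRT21} becomes
$$
\sum_{k=1}^\infty\frac{a_k\,\l^{n_k}\w\,T^{n_k}g}{W_k}=\w\sum_{k=1}^\infty\frac{a_k\l^{n_k}T^{n_k}g}{W_k}.
$$
Corollary \ref{CRT2} guarantees this converges $\m$-a.e. on $\bt$ and that the supremum of partial sums lies in $L_2(\m)$.

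The next step is to upgrade "$\m$-a.e. convergence of the $\w$-scaled series" to "norm convergence of the unscaled series." Since $|\w|=1$ for all $\w\in\bt$, the convergence of $\w\sum_k \frac{a_k\l^{n_k}T^{n_k}g}{W_k}$ at even a single point $\w_0$ forces convergence of $\sum_k \frac{a_k\l^{n_k}T^{n_k}g}{W_k}$ in the norm of $X$ (after dividing by $\w_0$). Because the $\m$-a.e. convergence set is nonempty, we are done. I would also note that this really gives convergence in $X$-norm because the individual series here is a series of fixed vectors in $X=\ch$, so "a.e." and "norm" coincide once a single good $\w$ is found.

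Alternatively, and perhaps more transparently, one can invoke Theorem \ref{T8}: condition \eqref{T71} coincides with \eqref{RT21}, and any $\{W_n\}\in\cw_2(G_n)$ with \eqref{T21} automatically satisfies \eqref{T72} (since \eqref{T21} is exactly \eqref{T72} for increasing $\{W_n\}$) and \eqref{T81} (since $(G_n/W_n)\in\ell_2$ forces $G_n/W_n\to0$). Theorem \ref{T8} then yields operator-norm convergence of $\sum_k \frac{a_k(\l T)^{n_k}}{W_k}$ for the contraction $\l T$ on $\ch$, hence norm convergence of $\sum_k \frac{a_k\l^{n_k}T^{n_k}g}{W_k}$ for every $g\in\ch$. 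I would present the rotation argument as the main line since it ties directly to the preceding theorem, and mention the Theorem \ref{T8} route as a remark.

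The main obstacle is essentially bookkeeping rather than a real difficulty: one must check that the scalar weight $\{W_n\}\in\cw_2(G_n)$ with \eqref{T21} indeed meets whichever hypotheses the invoked result needs, and that the operator $\l T$ (resp.\ the constant family $T_\w\equiv T$) is a genuine contraction of $\ch$, which is immediate. The only subtlety worth stating explicitly is the passage from the a.e.\ statement on $\bt$ to a genuine norm statement, i.e.\ observing that the nonemptiness of the convergence set plus $|\w|=1$ does all the work; no measure-theoretic argument beyond "full measure sets are nonempty" is required.
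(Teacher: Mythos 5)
Your main argument is exactly the paper's proof: take $\Omega=\bt$ with Haar measure, the rotation $\a(\w)=\l\w$, $h(\w)=\w$, and the constant family $T_\w\equiv T$, apply Corollary \ref{CRT2}, and then pass from a.e.\ convergence of $\w\sum_k a_k\l^{n_k}T^{n_k}g/W_k$ to norm convergence by picking a single $\w_0$ in the full-measure set and dividing by the unimodular scalar $\w_0$ (a step the paper leaves implicit but which you correctly spell out). Your alternative route via Theorem \ref{T8} applied to the contraction $\l T$ is a sound extra observation, but the primary line coincides with the paper's.
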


\begin{proof}  Let us consider a special particular case, when $\O=\mathbb{T}$,  and $\m$ is the standard Lebesgue measure on $\bt$.
Assume that $\a:\bt\to\bt$ is given by $\a(z)=\l z$, $\l\in\bt$. It is clear that $\a$ preserves the measure $\m$. Now, we take
$h(z)=z$ in \eqref{CRT21}. Then, from Corollary \ref{CRT2} we obtain  the norm  convergence of the series \eqref{CRT31}.
This implies the norm convergence of the weighted rotated one-sided ergodic Hilbert transform associated with a contraction of $T$ (under \eqref{RT21} condition).
\end{proof}

\begin{rem} We point out that the a.e. convergence of the rotated one-sided ergodic Hilbert transforms of a contraction $T$ acting on a Hilbert space has been investigated
in \cite{CCC}, where it was considered the case $W_n=n$, $a_n=1$. Our last result gives the norm convergence for contractions on arbitrary Banach spaces. This opens a new direction
in the theory of Hlbert transforms in Banach spaces (see, \cite{CCL, Cuny}).
\end{rem}

Now combining Theorem \ref{1RT}, Corollary \ref{C1RT} and Theorem \ref{RT2} one can establish the following result.

\begin{thm}\label{RT4}
Assume $(Y,\nu)$ be a probability space, and  $\{f_n\}\subset L_2(Y,\nu)$  be an independent with $\int f_n=0$ and $\sup\limits_n\|f_n\|_2<\infty$. Let $\{G_n\}$ be a weight, and suppose
that $\{n_k\}\subset\bn$ is strictly increasing sequence such that \eqref{1RT1} holds.
 Let $\a: \Omega\to\Omega$ be a measure-preserving transformation, and $\{T_\w\}$ be a family of measurable
linear contractions of $\ch$.
Then there is a subset $Y^*\subset Y$ with $\n(Y^*)=1$ such that for every $y\in Y^*$ and  for every $h\in L_2(\m)$ , $g\in \ch$
the series
\begin{equation}\label{RT41}
\sum_{k=1}^\infty\frac{f_k(y) h(\a^{n_k}(\w))T_\w T_{\a(w)}\cdots T_{\a^{n_k-1}(\w)}g}{W_k}
\end{equation}
converges $\m$-a.e. for any $\{W_n\}\in\cw_2(G_n)$ with  \eqref{T21}.
Moreover,
$$
\sup_{n\geq 1}\bigg\|\sum_{k=1}^n\frac{f_k(y) h(\a^{n_k}(\w))T_\w T_{\a(w)}\cdots T_{\a^{n_k-1}(\w)}g}{W_k}\bigg\|_\ch\in L_2(\m)
$$
\end{thm}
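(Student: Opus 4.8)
The plan is to obtain Theorem \ref{RT4} by conditioning on the random parameter $y$ and then invoking Theorem \ref{RT2}. First I would observe that the hypotheses of Theorem \ref{1RT} hold verbatim here: $\{f_n\}\subset L_2(Y,\n)$ is independent, $\int f_n=0$, $\sup_n\|f_n\|_2<\infty$, and $\{n_k\}$ is strictly increasing with \eqref{1RT1}. Applying Theorem \ref{1RT} and then Corollary \ref{C1RT}, I obtain a set $Y^*\subset Y$ with $\n(Y^*)=1$ such that for every $y\in Y^*$
\[
\sup_{n\geq1}\max_{|\l|=1}\bigg|\frac{1}{G_n}\sum_{k=1}^n f_k(y)\l^{n_k}\bigg|=:K(y)<\infty .
\]
The point of this step is that, for each fixed $y\in Y^*$, the \emph{scalar} sequence $a_k:=f_k(y)$ satisfies exactly condition \eqref{RT21} of Theorem \ref{RT2} with constant $K(y)$.

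Next I would fix $y\in Y^*$ and feed the sequence $\{a_k\}=\{f_k(y)\}$ into Theorem \ref{RT2}, together with the measure-preserving map $\a\colon\O\to\O$ and the family $\{T_\w\}$ of measurable contractions of $\ch$. Theorem \ref{RT2} then yields, for every $h\in L_2(\m)$, every $g\in\ch$, and every $\{W_n\}\in\cw_2(G_n)$ with \eqref{T21}, the $\m$-a.e.\ convergence of
\[
\sum_{k=1}^\infty\frac{f_k(y)\,h(\a^{n_k}(\w))\,T_\w T_{\a(\w)}\cdots T_{\a^{n_k-1}(\w)}g}{W_k}
\]
together with $\sup_{n\geq1}\big\|\sum_{k=1}^n(\cdots)\big\|_\ch\in L_2(\m)$. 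Since this holds for every $y$ in the full-measure set $Y^*$, Theorem \ref{RT4} follows; note that no Fubini argument is required, the assertion being, for $\n$-a.e.\ $y$, a pure statement about $\m$.

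The one non-routine point — and the main obstacle — is that Theorem \ref{RT2}, and the chain behind it (Theorem \ref{T4}(i), then Corollary \ref{CT2}, then Corollary \ref{CT1} and Theorem \ref{T2}), is stated for a \emph{bounded} coefficient sequence, whereas Corollary \ref{C1RT} only delivers the averaged estimate above; a priori $\{f_k(y)\}$ need not be bounded (uniform convergence in Theorem \ref{1RT} only gives $f_k(y)=o(G_k)$, and Parseval on $\bt$ gives $\sum_{k\leq n}|f_k(y)|^2\leq K(y)^2G_n^2$). I would handle this in one of two ways: either (a) add the mild extra hypothesis $\sup_n\|f_n\|_\infty<\infty$ — the setting of the intended applications, e.g.\ random signs — under which $\{f_k(y)\}$ is bounded and, by Remark \ref{r31}, $\{f_k(y)\,\ct^{n_k}F\}$ is automatically a $(W_n)$-sequence, where $\ct(F)(\w)=T_\w F(\a(\w))$ is the contraction on $L_2(\m,\ch)$ used in the proof of Theorem \ref{RT2} and $F=h\cdot g$; or (b) track where boundedness of $\{a_k\}$ is actually used — only to control $\sup_k\|f_k(y)\,\ct^{n_k}F\|_2$ and the $(W_n)$-sequence property of $\{f_k(y)\,\ct^{n_k}F\}$ — and verify the latter directly from $\|\ct^{n_k}F\|_2\leq\|F\|_2$ and the Cauchy--Schwarz bound $\big\|\sum_{k=n_m+1}^{n}f_k(y)\ct^{n_k}F\big\|_2\leq\|F\|_2\sqrt{n_{m+1}-n_m}\,K(y)\,G_{n_{m+1}}$, at the cost of a mild reinforcement of the admissibility of $\{W_n\}$. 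With this point addressed, everything else is a straightforward concatenation of the already established results.
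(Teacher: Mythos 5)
Your proof follows the paper's route exactly: the paper's entire argument for Theorem \ref{RT4} is the one-line remark that it is obtained ``combining Theorem \ref{1RT}, Corollary \ref{C1RT} and Theorem \ref{RT2}'', which is precisely your conditioning on $y\in Y^*$ and feeding $a_k=f_k(y)$ into Theorem \ref{RT2}. The one substantive point where you go beyond the paper is your observation that Theorem \ref{RT2} (through Theorem \ref{T4}, Corollary \ref{CT2} and Remark \ref{r31}) is stated for a \emph{bounded} coefficient sequence, whereas Corollary \ref{C1RT} only gives $f_k(y)=o(G_k)$ for $y\in Y^*$; this is a genuine gap in the paper's stated chain of implications that the paper does not acknowledge, and both of your patches are sound --- (a) assuming $\sup_n\|f_n\|_\infty<\infty$ restores the hypotheses verbatim, while (b) correctly isolates the only two places boundedness is used ($\sup_k\|a_k\,\ct^{n_k}F\|_2<\infty$ and the $(W_n)$-sequence property) and replaces them by the Parseval bound $\sum_{k\le n}|f_k(y)|^2\le K(y)^2G_n^2$ at the price of strengthening \eqref{W4}. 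So: correct, same approach, with a legitimate repair of a step the paper leaves implicit.
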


\begin{rem}  In particularly, if we consider a weight $G_n=n^{\g}$, then the last theorem is a new result for this type of weight. One can recover results of \cite{BW,CL2003}.
Moreover, by choosing the sequence $\{f_n\}$ and varying weights, one can obtain a lot interesting results.
\end{rem}

  \appendix

 \section{Examples of admissible weighted}

In this section, we provide some examples of admissible weights.

\begin{exm} \label{E1} Let $G_n=n^{1-\b}$, for some $\b\in(0,1]$ and $p>1$. Then a weight given by $W_n=n^{1-\d}$ with
$\d\in[0,\frac{p-1}{p}\b\big)$ is $p$-admissible w.r.t. $\{G_n\}$.  Indeed, first we set $n_k=[k^r]+1$, where $r=1/\b$. Then
\begin{eqnarray*}
\sum_{k=1}^\infty\bigg(\frac{G_{n_k}}{W_{n_k}}\bigg)^p&=&\sum_{k=1}^\infty\frac{1}{n_k^{(\b-\d)p}}\\[2mm]
&\leq&\sum_{k=1}^\infty\frac{1}{k^{pr(\b-\d)}}<\infty
\end{eqnarray*}
since $pr(\b-\d)=(1-r\d)p>1$. Hence, the condition \eqref{W1} is satisfied. Let us check \eqref{W2}. From the definition of the sequences, one gets
\begin{eqnarray}\label{E11}
\frac{n_{k+1}-n_k}{W_{n_k}}&\leq&\frac{(k+1)^r+1-k^r}{(k^r)^{1-\d}}\nonumber\\[2mm]
&\leq&\frac{2r(k+2)^{r-1}}{k^{r(1-\d)}}\nonumber\\[2mm]
&=&2r\bigg(\frac{k+2}{k}\bigg)^{r-1}\frac{1}{k^{1-r\d}}
\end{eqnarray}
Hence,
\begin{eqnarray}\label{EW2}
\bigg(\frac{n_{k+1}-n_k}{W_{n_k}}\bigg)^p\leq(2r)^p\bigg(\frac{k+2}{k}\bigg)^{p(r-1)}\frac{1}{k^{p(1-r\d)}}
\end{eqnarray}
which, due to the choice of $\d$, implies \eqref{W2}. We point out that this kind of weight was considered in \cite{CL2003}
\end{exm}

\begin{exm}\label{E2}  Let $\tilde G_n=n^{1-\b}\ln^\g n$, for some $\b\in(0,1]$, $\g\geq 1$ and $p>1$. Then a weight given by $\tilde W_n=n^{1-\d}\ln^\g n$ with
$\d\in[0,\frac{p-1}{p}\b\big)$ is $p$-admissible w.r.t. $\{\tilde G_n\}$.  Indeed, as above, we set $n_k=[k^r]+1$, where $r=1/\b$.  Using the same argument as in Example \ref{E1} we have \eqref{W1}.
Now according to \eqref{EW2}  one finds
\begin{eqnarray*}
\bigg(\frac{n_{k+1}-n_k}{\tilde W_{n_k}}\bigg)^p&\leq &(2r)^p\bigg(\frac{k+2}{k}\bigg)^{p(r-1)}\frac{1}{k^{p(1-r\d)}\ln^{\g p} k}\\[2mm]
&\leq& (2r)^p\bigg(\frac{k+2}{k}\bigg)^{p(r-1)}\frac{1}{k^{p(1-r\d)}}
\end{eqnarray*}
and we arrive at \eqref{W2}.
\end{exm}

\begin{exm} \label{E3} Let $R_n=\frac{n^{1-\b}}{\ln^\a n}$, for some $\b\in(0,1]$, $\a\geq 1$ and $p>1$. Then a weight given by $U_n=\frac{n^{1-\d}}{\ln^\a n}$ with
$\d\in[0,\frac{p-1}{p}\b\big)$ is $p$-admissible w.r.t. $\{R_n\}$. Indeed, let us take $n_k=[k^r]+1$, where $r=1/\b$.  One can check that \eqref{W1} is
satisfied. Again using \eqref{EW2}
\begin{eqnarray*}
\frac{n_{k+1}-n_k}{ U_{n_k}}&\leq &\frac{2r(k+2)^{r-1}\ln ^\a n_k}{k^{r(1-\d)}}\\[2mm]
&\leq &\frac{2r(k+2)^{r-1}(r+1)^\a\ln ^\a k}{k^{r(1-\d)}}\\[2mm]
&=&2r(r+1)^\a\bigg(\frac{k+2}{k}\bigg)^{r-1}\frac{\ln^\a k}{k^{r(1-\d)}}.
\end{eqnarray*}
One can check that the series
$$
\sum_{k=1}^\infty\frac{\ln^{\a p} k}{k^{pr(1-\d)}}
$$
converges, if $pr(1-\d)>1$.  Hence, the condition \eqref{W2} is satisfied.
\end{exm}

\begin{exm}\label{E4}  Now we are going to provide a more general example.  Let $\{G_n\}$ be a weight such that
\begin{equation}\label{EW3}
\sum_{k=1}^\infty\frac{1}{(G_k)^{p\e}}<\infty
\end{equation}
for some $\e>0$ and $p>1$.
Then a sequence $\{W_n\}$ is given by
$$
W_n=(G_n)^{\e+1}, \ \ n\in\bn
$$
is $p$-admissible w.r.t. $\{G_n\}$. Indeed, let us define the sequence $\{n_k\}$ as follows:
$$
n_1=1, \ \ n_{k+1}=[G_{n_k}]+n_k+1, \ \ k\in\bn.
$$
Then
$$
\sum_{k=1}^\infty\bigg(\frac{G_{n_k}}{W_{n_k}}\bigg)^p=\sum_{k=1}^\infty\frac{1}{(G_{n_k})^{p\e}}<\infty
$$
which yields \eqref{W1}. Let us check the second condition. One has
\begin{eqnarray*}
\frac{n_{k+1}-n_k}{\tilde W_{n_k}}&=&\frac{[G_{n_{k}}]+1}{(G_{n_k})^{1+\e}}\\[2mm]
 &\sim& \frac{1}{(G_{n_k})^{\e}}+ \frac{1}{(G_{n_k})^{\e+1}}\\[2mm]
 &\leq & \frac{1}{(G_{n_k})^{\e}}.
 \end{eqnarray*}
 This implies the condition \eqref{W2}.

 For instance, if we takes $G_n=n^{1/p}(\ln n)^{\b+1/p}(\ln \ln n)^\g$ (with $\b,\g>0$, $p>1$), then
 $$
 \sum_{k=1}^\infty\frac{1}{(G_n)^{p}}<\infty
 $$
 hence, $W_n=G_n^2$ is $p$-admissible.
\end{exm}

\section{More examples with condition  \eqref{T21}}

Let us consider some examples for which all the above noticed conditions are satisfied.

\begin{exm}\label{E5} Let $\{G_n\}$ and $\{W_n\}$ be weights considered in Example \ref{E1}, i.e.
$G_n=n^{1-\b}$, $W_n=n^{1-\d}$, where $\b\in(0,1]$, $p>1$ and
$\d\in[0,\frac{p-1}{p}\b\big)$.
One can see that
$$
\frac{G_k}{W_k}=\frac{1}{k^{\b-\d}}
$$
due to $\b-\d<1$, we infer  $\sum_{k=1}^\infty\frac{G_k}{W_k}=\infty$, while
\eqref{T21} is satisfied. Indeed,
 \begin{eqnarray*}
\frac{G_k}{W_k}\bigg(1-\frac{W_k}{W_{k+1}}\bigg)&=&\frac{1}{k^{\b-\d}}\frac{(k+1)^{1-\d}-k^{1-\d}}{(k+1)^{1-\d}}\\[2mm]
&\leq & \frac{1}{k^{\b-\d}}\frac{(1-\d)k^{-\d}}{k^{1-\d}}\\[2mm]
&=&\frac{1-\d}{k^{1+\b-\d}}
\end{eqnarray*}
One can see that $1+\b-\d>1$, therefore,
$$
\sum_{k=1}^\infty\frac{G_k}{W_k}\bigg(1-\frac{W_k}{W_{k+1}}\bigg)<\infty.
$$
\end{exm}

\begin{exm}\label{E6} Let $\{R_n\}$ and $\{U_n\}$ be weights considered in Example \ref{E3}, i.e.
$R_n=\frac{n^{1-\b}}{\ln ^\a n}$, $U_n=\frac{n^{1-\d}}{\ln^\a n}$, where $\b\in(0,1]$, $\a\geq 1$, $p>1$ and
$\d\in[0,\frac{p-1}{p}\b\big)$.  It is clear that
$$
\sum_{k=1}^\infty\frac{R_k}{U_k}=\infty.
$$
On the other hand, we have
 \begin{eqnarray*}
\frac{R_k}{U_k}\bigg(1-\frac{U_k}{U_{k+1}}\bigg)&=&\frac{1}{k^{\b-\d}}\bigg(1-\frac{k^{1-\d}}{(k+1)^{1-\d}} \frac{\ln^\a (k+1)}{\ln^\a k}\bigg)\\[2mm]
&\leq &\frac{1}{k^{\b-\d}}\bigg(1-\frac{k^{1-\d}}{(k+1)^{1-\d}}\bigg)\\[2mm]
&\leq&\frac{1-\d}{k^{1+\b-\d}}
\end{eqnarray*}
Hence, the series
$$
\sum_{k=1}^\infty\frac{R_k}{U_k}\bigg(1-\frac{U_k}{U_{k+1}}\bigg)
$$
converges.
\end{exm}

\begin{exm}\label{E7} Let us provide an other example of $p$-admissible weight. We are going to establish that the weight considered in Example \ref{E0} is indeed $p$-admissible.

Let $G_n=(\ln n)^{\b+1/p}(\ln\ln n)^\g$ with $\b>0$, $\g\geq 1$, and $n_m=[m^r]+1$, $r=1/\a$ with $\a\in(0,1]$. Define
$$
W_n=n^{\d/p}(\ln n)^{\b+1/p}(\ln\ln n)^\g, \ \ \ \d\geq p(1-\a)+1.
$$
Then $\{W_n\}$ is $p$-admissible w.r.t. $\{G_n\}$. Indeed, we first note that
$$
\sum_{m=1}^\infty\frac{1}{n_m}<\infty
$$
therefore
$$
\sum_{k=1}^\infty\bigg(\frac{G_{n_m}}{W_{n_m}}\bigg)^p=\sum_{m=1}^\infty\frac{1}{n_m^{\d}}<\infty.
$$
Furthermore, using the same argument of Example \ref{E1}
\begin{eqnarray*}
\frac{n_{m+1}-n_m}{W_{n_m}}&=&\frac{n_{m+1}-n_m}{n_m^{\d/p}G_{n_m}}\\[2mm]
&\leq&\frac{(m+1)^r+1-m^r}{m^{(1-\t)r}G_{n_m}}\nonumber\\[2mm]
&=&2r\bigg(\frac{m+2}{m}\bigg)^{r-1}\frac{1}{m^{1-r\t}G_{n_m}},
\end{eqnarray*}
where $\t=(p-\d)/p$.
Hence,
\begin{eqnarray*}
\bigg(\frac{n_{m+1}-n_m}{W_{n_m}}\bigg)^p &\sim&  \frac{1}{m^{(1-r\t)p}G^p_{n_m}}\\[2mm]
&\sim &\frac{1}{m^{(1-r\t)p}(\ln m)^{\b p+1}(\ln\ln m)^{\g p}}.
\end{eqnarray*}
Consequently, if $(1-r\t)p\geq1$, i.e. $\d\geq p(1-\a)+1$, the series
$$
\sum_{m=1}^\infty \frac{1}{m^{(1-r\t)p}(\ln n_m)^{\b p+1}(\ln\ln n_m)^{\g p}}
$$
converges. So, $\{W_n\}$ is $p$-admissible w.r.t. $\{G_n\}$.

Let us establish that the weights $\{G_n\}$ and $\{W_n\}$  satisfy the condition \eqref{T21}.  For the sake of simplicity, we assume that $\d=1$. Then
 \begin{eqnarray}\label{E71}
\frac{G_k}{W_k}\bigg(1-\frac{W_k}{W_{k+1}}\bigg)&=&\frac{(k+1)^{1/p}G_{k+1}-k^{1/p}G_{k}}{k^{1/p}(k+1)^{1/p}G_{k+1}}
\end{eqnarray}
Now considering the derivative of
$\psi(x)=x^{1/p}(\ln x)^{\b+1/p}(\ln\ln x)^\g$, we can estimate RHS of \eqref{E71}  as follows
$$
\frac{(k+1)^{1/p}G_{k+1}-k^{1/p}G_{k}}{k^{1/p}(k+1)^{1/p}G_{k+1}}\leq\frac{C}{k^{1+1/p}}, \ \ k\in\bn
$$
for some constant. This implies the convergence of the series
$$
\sum_{k=1}^\infty \frac{G_k}{W_k}\bigg(1-\frac{W_k}{W_{k+1}}\bigg).
$$
\end{exm}

\end{document}